\DeclareMathOperator{\tr}{tr}
\DeclareMathOperator{\Div}{div}
\renewcommand{\epsilon}{\varepsilon}
\newcommand{\boF}{\mathcal{F}}
\newcommand{\boM}{\mathcal{M}}
\newcommand{\R}{\mathbb{R}}
\newcommand{\Z}{\mathbb{Z}}
\newcommand{\B}{\mathbb{B}}
\newcommand{\D}{\mathbb{D}}
\renewcommand{\S}{\mathbb{S}}
\newcommand{\la}{\langle}
\newcommand{\ra}{\rangle}
\newcommand{\eps}{\varepsilon}
\newcommand{\id}{\text{id}}
\newcommand{\Ome}{\Omega}
\newtheorem{thm}{Theorem}
\newtheorem{prop}[thm]{Proposition}
\newtheorem{lem}[thm]{Lemma}
\newcommand{\inter}[1]{\overset{\circ}{#1}}
\newcommand{\barre}[1]{\overline{#1}}
\renewcommand{\phi}{\varphi}
\DeclareMathOperator{\Int}{int}
\newtheorem*{thm*}{Theorem}
\newtheorem*{claim*}{Claim}
\theoremstyle{remark}
\newtheorem{remarq}{Remark}
\newtheorem*{rem*}{Remark}
\newcounter{remark}
\newcounter{case}
\newcounter{construction}
\newcounter{fact}
\newcounter{step}
\newcommand{\wPhi}{\widetilde\Phi}
\DeclareMathOperator{\diff}{Diff}
\title{Minimal planes in asymptotically flat three-manifolds}
\author{Laurent Mazet}
\address{Universit\'e Paris-Est, LAMA (UMR 8050), UPEC, UPEM, CNRS, 61, avenue
du G\'en\'eral de Gaulle, F-94010 Cr\'eteil cedex, France}
\email{laurent.mazet@math.cnrs.fr}
\author{Harold Rosenberg} 
\address{Instituto Nacional de Matematica Pura e Aplicada (IMPA) Estrada Dona
Castorina 110, 22460-320, Rio de Janeiro-RJ, Brazil}
\email{rosen@impa.br}
\begin{document}

\maketitle

\begin{abstract}
In this paper, we improve a result by Chodosh and Ketover \cite{ChKe}. We prove
that, in an asymptotically flat $3$-manifold $M$ that contains no closed minimal
surfaces, fixing $q\in M$ and $V$ a $2$-plane in $T_qM$ there is a properly
embedded minimal plane $\Sigma$ in $M$ such that $q\in\Sigma$ and $T_q\Sigma=V$. We also prove
that fixing three points in $M$ there is a properly embedded minimal plane
passing through these three points.
\end{abstract}


\section{Introduction}


This paper is inspired by the recent work of Otis Chodosh and Daniel
Ketover~\cite{ChKe}. They consider an asymptotically Euclidean $3$-manifold $M$
that contains no closed minimal surfaces. Then they prove the existence of
a properly embedded minimal plane passing through any point $q$ of $M$.

Here we obtain some more information:
\begin{itemize}
\item if $q\in M$ and $V\subset T_qM$ is a $2$-plane, then there is a properly
embedded minimal plane $\Sigma$ in $M$ with $q\in M$ and $T_q\Sigma=V$.
\item if $q_1,q_2,q_3$ are $3$ points of $M$, there is a properly embedded
minimal plane $\Sigma$ in $M$ containing the three points.
\end{itemize}

The main ingredient is the following Theorem
\begin{thm*}
Let $\B$ be the unit ball of $\R^3$ and let $M$ be $\B$ with a Riemannian metric
$g$ satisfying
\begin{itemize}
\item $\partial M$ is strictly mean convex, and
\item $M$ contains no closed minimal surfaces.
\end{itemize}
Then we have the following statements.
\begin{enumerate}
\item If $q\in \Int (M)$ and $V\subset T_qM$ is a $2$-plane, there is an
embedded minimal disk $D\in M$ with $\partial D=D\cap \partial M$, $q\in D$ and
$T_qD=V$.
\item If $q_1,q_2,q_3$ are $3$ points in $\Int(M)$, there is an embedded minimal
disk $D$ in $M$ with $\partial D=D\cap \partial M$ and $q_1,q_2,q_3$ in $D$.
\end{enumerate}
\end{thm*}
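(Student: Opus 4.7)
The plan is a free-boundary min-max construction in the spirit of Chodosh--Ketover, with the parameter space of sweepouts enlarged to carry the topological weight of the geometric constraint, together with a degree or local deformation argument to guarantee that the minimal disk produced satisfies the prescribed data.

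For each part, I would consider a class $\boF$ of continuous families $\{\Sigma_\sigma\}_{\sigma\in P}$ of embedded disks in $M$ with $\partial \Sigma_\sigma \subset \partial M$, parametrized by a compact space $P$ that contains a designated interior parameter $\sigma_0$ at which the slice is required to satisfy the prescribed data. The dimension of $P$ should match the codimension of the constraint, which is $3$ in both cases: in part (1), the condition ``$q\in \Sigma$ and $T_q\Sigma = V$'' is a codimension-$1$ condition (passing through $q$) together with a codimension-$2$ condition (prescribing the tangent plane in the $2$-dimensional Grassmannian $\mathrm{Gr}_2(T_qM) \simeq \R P^2$); in part (2), ``$q_i\in \Sigma$ for $i=1,2,3$'' is three independent codimension-$1$ conditions. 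A natural choice is $P = [0,1]\times \mathrm{Gr}_2(T_qM)$ for part (1) and $P = [0,1]\times Q$ for some $2$-dimensional $Q$ labelling the two extra point-hitting parameters for part (2).

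The min-max width $W = \inf_{\boF} \sup_\sigma |\Sigma_\sigma|$ should be strictly positive: otherwise a minimizing sequence of sweepouts produces, via the standard limiting argument, either a degenerate slice at $\sigma_0$ (contradicting the constraint) or a non-trivial stationary integral varifold of zero area in $M$, contradicting the absence of closed minimal surfaces (with the strict mean convexity of $\partial M$ ruling out boundary concentration). The free-boundary min-max theorem for disks (as developed by De Lellis--Ramic, Li, and others, and used by Chodosh--Ketover) then yields an embedded minimal disk $D\subset M$ with $\partial D\subset \partial M$ and $\mathrm{Area}(D)=W$.

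The main obstacle is the last step: to show that $D$ actually satisfies the prescribed geometric data. I would argue by contradiction: if $D$ did not pass through $q$ with tangent plane $V$ (resp.\ through all three points), one should be able to locally perturb the minimizing sweepout near the designated slice $\Sigma_{\sigma_0}$, strictly decreasing $\sup_\sigma |\Sigma_\sigma|$ while preserving the constraint at $\sigma_0$, contradicting the definition of $W$. Making this rigorous requires a degree-theoretic argument on $P$ to pin down the min-max critical slice at (or near) $\sigma_0$, a quantitative perturbation inequality relating the area deficit to the failure of the constraint, and control of the min-max limit to rule out bubbling, concentration away from the constraint, and multiplicity or sheeting issues. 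The tangent plane case in part (1) is the most delicate, since the constraint depends on first-order derivatives of the slice; the multi-point case is handled analogously by aligning the parameter space with three independent point-hitting conditions.
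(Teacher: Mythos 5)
The paper's proof does not use min-max at all. Instead it applies White's moduli-space theory together with a homological degree argument: one first perturbs the family of parallel boundary circles $\Phi(p,v,t)$ on $\S^2$ to be transverse to the boundary map $\delta:\boM\to C^{k,\alpha}(\S^1,\partial M)$, obtaining a compact $4$-manifold $\Sigma\subset\boM$ of embedded minimal disks, constructs a continuous parametrization $F:\Sigma\to C^{2,\alpha}(\D,\B)$ (this needs the contractibility of $\diff(\D,\partial\D)$), and then shows that the evaluation map $G:(\sigma,\zeta)\mapsto(\widetilde F(\sigma)(\zeta),\nu(\sigma)(\zeta),h(\sigma)(\zeta))$ into $\B\times U\S^2$ (respectively $\widetilde H$ into $\B^3$ for three points, after a $(\Z/2\Z)^2$ quotient) is surjective because it has nonzero $\Z/2$-degree, computed explicitly from the behaviour near the poles where the disks are small graphs. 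The introduction even remarks that Chodosh--Ketover deliberately chose this route \emph{over} variational Plateau/min-max techniques.

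Your proposal has a genuine gap at exactly the point you flag as the main obstacle. In min-max, the critical slice is selected by maximizing area along an almost-optimal sweepout; there is no mechanism forcing the limiting minimal disk to coincide with (or even be near) the designated slice at $\sigma_0$, so the produced disk has no reason to pass through $q$ with the prescribed tangent plane, let alone through three prescribed points. ``Pinning down the critical slice at $\sigma_0$ by a degree argument on $P$'' does not describe any existing min-max mechanism: degree theory in this setting is applied to a map from the space of minimal disks to the constraint target, not to the sweepout parameter space, and the width selects a slice by area, not by a topological index. The alternative of constraining \emph{every} slice of the sweepout to satisfy the condition also fails: for the tangent-plane constraint the slices must degenerate at the endpoints of $P$, which is incompatible with a fixed tangent plane at a fixed interior point, and in the three-point case a sweepout with all slices through $q_1,q_2,q_3$ need not exist in the required regularity class, and even if it did, the varifold limit could lose the constraint through concentration or multiplicity. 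So while the heuristic that the parameter space should have dimension equal to the codimension of the constraint is correct and does appear (in disguise) in the paper's degree computation, the min-max framework itself does not transmit the constraint to the critical surface, and without a concrete replacement for the surjectivity argument the proposal does not yield the theorem.
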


The introduction to the paper of Chodosh and Ketover~\cite{ChKe} contains an
excellent exposition of the problem of the existence of minimal surfaces in
asymptotically flat $3$-manifolds, in particular, properly embedded minimal
planes. They discuss the relevance of this theory to general relativity. They also
explain why they employ the theory of Brian White "moduli space of minimal
surfaces and degree theory" to prove their theorem rather than variational
techniques, solving Plateau problems in geodesic balls and taking limits.

Our proof of our main Theorem above also starts with the theory of White, the
moduli space of embedded minimal disks in the ball $M=(\B,g)$.

A natural question is what are the complete Riemannian $3$-manifolds that
contain properly embedded minimal planes and what are such planes. In the
Euclidean $\R^3$, we know the only such planes are affine planes and helicoids
\cite{MeRo3}. Moreover affine planes are completely characterized by a point in it and
its tangent plane at that point or by $3$ non collinear points contained in it. This explains
why we are interested in prescribing these constraints. We notice that in the
case of the Euclidean $\R^3$ our proof produces affine planes and no helicoids.

Actually, in the general case, the minimal planes we construct have quadratic
area growth. One might suspect that these minimal planes are the
only ones with these constraints with quadratic area growth as it is the case in
$\R^3$. However this is not true; we construct a counterexample. 

For other ambient spaces, the existence of properly embedded planes is still an
open question. For example, when $g$ is a complete metric of non positive
curvature, we do not know if such a minimal plane exists. When the curvature is
strictly pinched, by the work of Anderson they exist \cite{And}. When $S$ is a
closed surface, there is no properly embedded minimal plane in $S\times \R$, with the
product metric.
The same problem for embedded
minimal annuli $\S^1\times \R$ is also very interesting: is it possible to
construct "catenoids" in some ambient manifolds? In the same spirit, we mention a 
question:  Let $M$ be a complete non compact Riemannian $3$-manifold.  Is there a 
properly embedded minimal surface in $M$?

The paper is organized as follow. In Section~\ref{sec:white}, we recall the main
points of White's theory. Section~\ref{sec:prescrib} is devoted to the proof of item 1
of the above main theorem (Theorem~\ref{th:prescrip}). Section~\ref{sec:3point}
deals with item 2 of the main theorem (Theorem~\ref{th:3point}). In 
Section~\ref{sec:extent}, we explain how one can pass from a minimal disk in a compact 
ball to a
properly embedded minimal plane in an asymptotically Euclidean space, here we
use ideas developed by Chodosh and Ketover in \cite{ChKe}. In the last section, we give a 
counterexample to some uniqueness statement.

The first author would like to thank P. Topping for very interesting discussions.


\section{The space of minimal disks}
\label{sec:white}


After Tomi and Tromba~\cite{ToTr}, White~\cite{Whi} has developed the theory of
the moduli space of minimal submanifolds. Here we recall some of the aspects of
this theory that we will use.

Let us denote by $\B$ the closed unit ball in $\R^3$ and $\D$ the closed unit
disk in $\R^2$. Let $M$ be $\B$ endowed with a Riemannian metric $g$.

Let $f_i:\D\to \R^3$ ($i=1,2$) be two $C^{k,\alpha}$ maps ($k\ge 2$ and $0<\alpha<1$). They are said to be
equivalent if $f_1=f_2\circ \phi$ for some $C^1$ diffeomorphism $\phi$ of $\D$
that is the identity on $\partial \D=\S^1$ ;
let $\boF$ denote the set of equivalence classes. We also denote by
$\diff_{k,\alpha}(\D,\partial \D)$ the set of $C^{k,\alpha}$ diffeomorphisms of $\D$ that are the
identity on $\partial \D$.

 On $\boF$, there is a boundary
operator $\delta:\boF\to C^{k,\alpha}(\S^1, \R^3)$ which is defined by
$\delta([f])=f_{|\S^1}$. The space of immersed minimal disks $\widetilde \boM$
is then the space of equivalence classes $[f]$ of $C^{k,\alpha}$ minimal
immersions $f:\D\to M$ such that $\delta ([f])\in C^{k,\alpha}(\S^1,\partial
M)$ and $f$ is never tangent to $\partial M$ (see Section~8 in~\cite{Whi}). We
notice that if $[f_1]=[f_2]$ where $f_1$ and $f_2$ are two $C^{k,\alpha}$
immersions then $\phi\in \diff_1(\D,\partial \D)$  such that $f_1=f_2\circ \phi$
is actually in $\diff_{k,\alpha}(\D,\partial \D)$ (see Appendix~\ref{appendix}).

The main result of~\cite{Whi} is the following theorem.
\begin{thm}
The space $\widetilde\boM$ is a smooth Banach manifold and 
$$
\delta:\widetilde\boM\to C^{k,\alpha}(\S^1,\partial M)
$$
is a smooth Fredholm map of index $0$.
\end{thm}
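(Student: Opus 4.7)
The plan is to apply the implicit function theorem, following White's general strategy. Fix $[f_0] \in \widetilde\boM$ and let $\nu_0$ be a unit normal to $f_0$ and $\gamma = f_0|_{\partial\D}$. First, I would construct a local chart for $\boF$ near $[f_0]$ that respects the quotient by $\diff_{k,\alpha}(\D, \partial\D)$. Since this group acts only in the interior, one can choose a slice by writing nearby immersions as $f_V := \exp_{f_0}(V)$ with $V \in C^{k,\alpha}(\D, f_0^*TM)$ restricted to be $L^2$-orthogonal to the tangential directions of $f_0$ in the interior, while the boundary restriction $V|_{\partial\D}$ is left free so as to capture all boundary reparametrizations. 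This identifies a neighborhood of $[f_0]$ in $\boF$ with an open subset of a Banach space.

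Next, define the nonlinear map $F(V) := \bigl(H_g(f_V),\, \la V|_{\partial\D}, N\ra\bigr)$, where $H_g$ is the mean curvature of $f_V$ and $N$ is an outward unit conormal to $\partial M$ along $\gamma$; then $\widetilde\boM$ is locally cut out by $F = 0$. Decomposing the normal part of $V$ as $u\,\nu_0$, the linearization at $V = 0$ of the interior equation is the Jacobi equation
\[
\boL u := \Delta_{f_0} u + \bigl(|A_{f_0}|^2 + \Ric_g(\nu_0,\nu_0)\bigr)u = 0,
\]
while the linearization of the boundary equation, after writing the boundary restriction of $V$ as $u\,\nu_0 + a\,\gamma'/|\gamma'| + b\,\mu$ (with $\mu$ a unit tangent to $f_0$ along $\gamma$ perpendicular to $\gamma'$), expresses $b$ as a linear function of $u|_{\partial\D}$ with nonvanishing coefficient; the non-tangency of $f_0$ to $\partial M$ is essential at this step. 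The combined linearization is an elliptic boundary value problem of Agmon-Douglis-Nirenberg type, and the Fredholm implicit function theorem then endows $\widetilde\boM$ with the structure of a smooth Banach manifold whose tangent space at $[f_0]$ is parametrized by pairs $(u, a)$, where $u$ is a normal Jacobi field and $a \in C^{k,\alpha}(\S^1)$ is free.

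For the statement on $\delta$, I would compute $d\delta_{[f_0]}$, which sends such a pair $(u, a)$ to the boundary trace $V|_{\partial\D} \in C^{k,\alpha}(\S^1, \gamma^*T\partial M)$. Its kernel consists of pairs with $a = 0$ and $u|_{\partial\D} = 0$, which reduces to the finite-dimensional space of normal Jacobi fields vanishing on $\partial\D$, by elliptic regularity. The Fredholm alternative applied to the formally self-adjoint elliptic operator $\boL$ with Dirichlet boundary condition then shows that the cokernel has the same finite dimension as the kernel, whence $d\delta_{[f_0]}$ is Fredholm of index $0$. The most delicate technical step I expect is the verification of the Shapiro-Lopatinski ellipticity of the boundary conditions for the coupled linearized system on the slice; this is precisely where the non-tangency hypothesis enters essentially, and it constitutes the analytic core of White's theorem.
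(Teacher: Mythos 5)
The paper does not actually prove this theorem: it is quoted as the main result of White~\cite{Whi}, and the only extra indication given is the description of White's local charts recalled just before the proof of Lemma~\ref{lem:smale} (a neighborhood of $\bar\sigma$ in $\boM$ is identified with a codimension-$j$ submanifold of $C^{2,\alpha}(\partial\D,\partial M)\times\R^{j}$, where $j=\dim\ker D\delta(\bar\sigma)$, on which $\delta$ becomes the projection to the first factor). So your sketch has to be compared with White's own proof, not with anything in this paper.

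Your outline is a faithful summary of White's strategy: pick a slice of the $\diff_{k,\alpha}(\D,\partial\D)$-action, express the two constraints (vanishing mean curvature and boundary on $\partial M$) as the zero set of a map $F$, linearize to obtain the Jacobi operator $\boL$ together with a pointwise algebraic boundary relation coming from non-tangency, and compute the index of $d\delta$ from the formal self-adjointness of $\boL$. The index computation is correct: the kernel of $d\delta$ is the finite-dimensional space $K$ of Jacobi fields vanishing on $\partial\D$, and the image consists exactly of those boundary data $\phi$ with $\int_{\partial\D}\phi\,\partial_\nu w\,ds=0$ for all $w\in K$, so the cokernel also has dimension $\dim K$ and the index is $0$. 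Your identification of the role of non-tangency (to write the conormal component $b$ of the boundary variation as a function of $u|_{\partial\D}$ with a nonvanishing factor) is also the right one.

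Two caveats, neither of which damages the index computation. First, when $\dim K>0$ the Banach manifold structure is not a bare implicit function theorem: one must check that $DF$ is surjective \emph{and} has complemented kernel, and producing the complement is precisely what White's charts in $C^{k,\alpha}(\partial\D,\partial M)\times\R^{j}$ do, via a Lyapunov--Schmidt-type splitting. Your tangent space description ``Jacobi field $u$ plus free boundary-tangential $a$'' is the right answer, but asserting it requires knowing that $\ker\boL$ (with free Dirichlet data) is complemented in the slice, which is not automatic. Second, the Shapiro--Lopatinski remark is somewhat misplaced: the boundary constraint $\langle V|_{\partial\D},N\rangle=0$ is a pointwise algebraic relation rather than a boundary differential operator, so the relevant ellipticity is just that of the Dirichlet problem for $\boL$ (for which SL is trivial); the genuinely delicate step in White's proof is the construction of the splitting and the charts, not SL ellipticity of a coupled system.
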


The descriptions of the charts of $\widetilde\boM$ are given in
\cite[Theorem~3.3]{Whi} (see also the proof of Lemma~\ref{lem:smale}).

Actually, we are going to consider just the open subset $\boM$ of $[f]\in\widetilde
\boM$ where $f$ is a minimal embedding of the disk. $\boM$ is the space of
minimal embedded disks in $M$ with boundary in $\partial M$ and never tangent to $\partial M$.

A second important result gives conditions under which the map $\delta$ is proper.
\begin{thm}\label{th:properness}
Let us assume that $M$ satisfies the following two conditions
\begin{itemize}
\item $\partial M$ is strictly mean-convex towards $M$ and
\item $M$ contains no closed minimal surfaces. 
\end{itemize}
Then the map $\delta:\boM \to C^{k,\alpha}(\S^1,\partial M)$ is proper
\end{thm}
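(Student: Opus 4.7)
My plan is to extract a subsequential smooth limit of the underlying surfaces and then promote it to convergence in $\boM$ by a reparametrization argument. Fix a sequence $[f_n]\in\boM$ with $\delta([f_n])=f_n|_{\S^1}\to\gamma$ in $C^{k,\alpha}(\S^1,\partial M)$, and set $\Sigma_n=f_n(\D)$. The argument would have three stages: uniform area bounds on $\Sigma_n$, uniform curvature bounds on $\Sigma_n$, and reparametrization of the smooth limit.

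The first and key stage is to prove $\mathrm{Area}(\Sigma_n)\le C$ uniformly, and this is where both hypotheses of the theorem enter. I would argue by contradiction: if $\mathrm{Area}(\Sigma_n)\to\infty$, normalize $V_n=\mathrm{Area}(\Sigma_n)^{-1}[\Sigma_n]$ as Radon measures on $\overline{M}$. Since $|\partial\Sigma_n|$ is bounded (the boundary curves converge in $C^{k,\alpha}$), the total first variation satisfies $\|\delta V_n\|=|\partial\Sigma_n|/\mathrm{Area}(\Sigma_n)\to 0$, so by Allard's compactness theorem a subsequence converges to a stationary integral $2$-varifold $V$ of mass one in $\overline{M}$. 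Strict mean-convexity of $\partial M$ towards $M$, combined with the maximum principle, forces $\mathrm{supp}(V)\subset \Int(M)$, so $V$ is a compactly supported stationary integral varifold in the interior. Allard regularity in this codimension-one setting then yields a nonempty smooth closed minimal surface in $M$, contradicting the second hypothesis.

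With uniform area and boundary-length bounds, embeddedness, genus zero, and boundaries converging in $C^{k,\alpha}$ transversally to $\partial M$, classical curvature estimates for embedded minimal surfaces (Choi--Schoen / Colding--Minicozzi in the interior, plus boundary Schauder estimates and reflection near $\partial M$) yield a uniform bound on $|A_{\Sigma_n}|$. A standard compactness argument then extracts a smooth subsequential limit $\Sigma_\infty$, a minimal surface with boundary $\gamma$ meeting $\partial M$ transversally. Because convergence is smooth and locally graphical, the topology passes to the limit, so $\Sigma_\infty$ is a topological disk; since $\gamma$ is a single embedded curve traversed once, the multiplicity is one and $\Sigma_\infty$ is embedded.

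Finally, I would choose a parametrization $f_\infty:\D\to M$ of $\Sigma_\infty$ realizing $\gamma$ as its boundary with the correct orientation, and use the smooth graphical convergence $\Sigma_n\to\Sigma_\infty$ to construct diffeomorphisms $\phi_n\in\diff_{k,\alpha}(\D,\partial \D)$ with $f_n\circ\phi_n\to f_\infty$ in $C^{k,\alpha}(\D,M)$, giving $[f_n]\to[f_\infty]$ in $\boM$. The main obstacle is the area bound in the second paragraph: it is the only step that genuinely uses both the strict mean-convexity of $\partial M$ and the absence of closed minimal surfaces, and it requires a delicate combination of varifold compactness, a maximum-principle argument at $\partial M$, and interior regularity for stationary integral $2$-varifolds. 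The curvature estimates and reparametrization are then essentially standard machinery once the area is controlled.
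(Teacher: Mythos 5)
The paper does not actually give a self-contained proof of Theorem~\ref{th:properness}: it simply assembles White's results, citing Theorem~C of \cite{Whi2} for the properness criterion, the area bound of Theorem~2.1 in \cite{Whi4} (valid under exactly the two hypotheses here), and the curvature estimate of Theorem~0 in \cite{Whi3}. Your proposal instead attempts a direct proof from scratch, so it is a genuinely different route, and its overall skeleton (area bound $\Rightarrow$ curvature bound $\Rightarrow$ smooth subsequential limit $\Rightarrow$ reparametrization) is the right one. The curvature-estimate and reparametrization stages are standard once the area bound is in hand.

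The gap is in the area-bound stage, which you correctly identify as the crux. Once you normalize $V_n=\mathrm{Area}(\Sigma_n)^{-1}[\Sigma_n]$, the varifolds $V_n$ are no longer \emph{integral} (their density is $1/\mathrm{Area}(\Sigma_n)\to 0$), so Allard's compactness theorem for integral varifolds does not apply; what you get from the general (Radon-measure) varifold compactness theorem is only a stationary $2$-varifold $V$ of mass one, with no rectifiability or density lower bound. Consequently Allard regularity cannot be invoked, and the claim that $V$ produces a smooth closed minimal surface does not follow. Even if one could somehow argue $V$ were integral, Allard regularity only gives an open dense regular set, not a closed embedded surface; passing from a compactly supported stationary integral varifold in $\mathrm{Int}(M)$ to an actual closed minimal surface requires a further argument. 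In addition, the maximum-principle step used to push $\mathrm{supp}(V)$ off $\partial M$ is stated for stationary varifolds touching a strictly mean-convex barrier, and the versions of this principle that are available (White, Ilmanen) are proved for integral or at least rectifiable varifolds, so this step also needs justification in your normalized (non-integral) setting. These are not cosmetic issues; they are exactly why White's isoperimetric inequality in a mean-convex domain with no closed minimal surfaces (the cited Theorem~2.1 of \cite{Whi4}) is a theorem in its own right rather than a one-paragraph consequence of varifold compactness. To repair the argument you would either have to invoke that theorem directly, as the authors do, or replace the normalization-plus-Allard step with an argument that keeps the varifolds integral (e.g.\ working with the unnormalized $[\Sigma_n]$ restricted away from $\partial M$ and exploiting monotonicity and a lamination-compactness argument), which is substantially more delicate than what you have written.
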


This result comes by combining different results by White: Theorem~C in \cite{Whi2} gives
the properness thanks to the area estimate of Theorem~2.1 in~\cite{Whi4} ($M$
has mean convex boundary and no closed embedded minimal surfaces) and the curvature
estimate given by Theorem~0 in~\cite{Whi3}.


\section{Prescribing the tangent plane to a minimal disk in a Riemannian $3$-ball}
\label{sec:prescrib}


In this section we want to find a minimal disk in $M$ passing through a point $q$
with a prescribed tangent space. We still assume that $M$ is $\B$ endowed with a
metric $g$ but we also assume that $g$ can be extended to the whole $\R^3$. Thus
the main result of this section is the
following theorem

\begin{thm}\label{th:prescrip}
Let $M=(\B,g)$ be as above such that
\begin{itemize}
\item[(H1)] $\partial M$ is strictly mean convex towards $M$ and
\item[(H2)] $M$ contains no closed minimal surfaces. 
\end{itemize}
Let us consider $q\in M\setminus \partial M$ and $V$
be a $2$-dimensional subspace of $T_qM$. There exists $\sigma\in \boM$ such that
$q\in \sigma$ and $V=T_q\sigma$.
\end{thm}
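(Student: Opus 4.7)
The plan is to extend White's degree-theoretic framework so as to control both the interior point and the tangent plane along the minimal disk, paralleling the strategy of Chodosh–Ketover but with an enlarged moduli space.

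I would first introduce the augmented moduli space
$$\boN = \bigl\{ (\sigma, p) : \sigma \in \boM,\ p \in \sigma\setminus\partial\sigma \bigr\},$$
which carries a natural Banach manifold structure such that the forgetful map $\boN \to \boM$ is a locally trivial fibration with $2$-dimensional fibers. Combined with the evaluation of the tangent $2$-plane, this yields the smooth map
$$\Psi : \boN \longrightarrow C^{k,\alpha}(\S^1,\partial M) \times M \times \operatorname{Gr}(2, TM),\qquad (\sigma, p) \mapsto \bigl(\delta(\sigma),\, p,\, T_p\sigma\bigr),$$
which is Fredholm of index $-3$, since $\delta$ has index $0$ while the interior evaluation contributes two source dimensions against five target dimensions. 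By Theorem~\ref{th:properness}, $\delta$ is proper, and $\Psi$ inherits properness over compact subsets of $M \times \operatorname{Gr}(2, TM)$.

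To reduce to a finite-dimensional problem, I would fix a smooth, generic $3$-parameter family $(\gamma_\xi)_{\xi \in \Xi}$ of embedded curves in $\partial M$, with $\Xi$ a closed $3$-manifold, and form
$$\boN_\Xi := \bigl\{(\sigma, p, \xi)\in \boN\times \Xi : \delta(\sigma) = \gamma_\xi\bigr\},$$
which for a generic choice is a smooth $5$-manifold by the Sard–Smale theorem. The restricted evaluation $\Psi_\Xi : \boN_\Xi \to M \times \operatorname{Gr}(2, TM)$, $(\sigma, p, \xi) \mapsto (p, T_p\sigma)$, is then a proper map between two $5$-manifolds and therefore admits a well-defined mod-$2$ degree. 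The theorem reduces to showing that this degree is nonzero at $(q, V)$. To compute it, I would deform $g$ to a reference Euclidean metric $g_0$ on $\B$ via a smooth homotopy $g_t$ in which every intermediate metric still satisfies (H1) and (H2); the assumed extension of $g$ to $\R^3$ gives enough room to arrange this, e.g.\ by interpolating outside a neighborhood of a sufficiently large concentric ball. Along the path the mod-$2$ degree is constant, and for $g_0$ the situation is explicit: each affine $2$-plane through an interior point cuts $\B$ in a single flat disk, so $\Psi_\Xi$ has exactly one transverse preimage over $(q, V)$ and the degree equals $1$. Invariance transports this to $g$ and yields the desired $\sigma$.

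The main difficulty will be constructing a homotopy $g_t$ that preserves both strict mean convexity of $\partial\B$ and the absence of closed minimal surfaces throughout: the former is stable under small perturbations, but the latter can fail under naive interpolation and typically has to be handled by combining localized metric changes with an auxiliary deformation of the underlying ball so that the Euclidean endpoint is reached through admissible metrics. A second delicate point is arranging the $3$-parameter family $\Xi$ so that $\Psi_\Xi$ is transverse to $(q, V)$ for the Euclidean model and that this transversality persists along the homotopy, which is where White's explicit charts for $\boM$ together with Sard–Smale enter.
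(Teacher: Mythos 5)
Your strategy is genuinely different from the paper's and hinges on two steps that you flag but do not resolve, and at least the first one is a real gap rather than a routine technicality.

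The decisive step in your plan is the deformation $g_t$ from the given metric $g$ to the Euclidean metric $g_0$ while keeping \emph{both} (H1) (strict mean convexity of $\partial\B$) and (H2) (no closed minimal surfaces) along the entire path. Condition (H1) is open and easy to preserve, but (H2) is a global obstruction that can be destroyed by interpolation: a path of mean-convex metrics on $\B$ can perfectly well pass through a metric admitting a closed minimal sphere or torus in the interior, and there is no reason the linear interpolation or any localized modification "outside a large concentric ball" should avoid this. You assert that the extension of $g$ to $\R^3$ gives "enough room," but no mechanism is offered, and the set of metrics on $\B$ satisfying (H1)--(H2) is not obviously path-connected. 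Since the entire degree computation rests on transporting the count from $g_0$ to $g$, this gap cannot be waved away. The paper sidesteps the issue completely: it never varies the metric. Instead it works inside the fixed $(\B,g)$ and exploits a built-in degeneration — as the parallel circles $\Phi(p,v,t)$ shrink to the poles ($t\to\pm 1$), the minimal disks $D_{p,v,t}$ shrink to points and the map to $\B\times U\S^2$ has explicit, universal boundary behavior. This replaces your deformation of metrics with a homotopy of the boundary map $G|_{\partial K}$ to an explicit degree-one map $\widetilde G$, and the nonvanishing of $[G(\partial K)]$ in $H_5$ then follows from a Mayer–Vietoris argument, with no need to compare different metrics.

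The second unresolved point, making the $3$-parameter family $\Xi$ transverse simultaneously for all metrics along a path, is also nontrivial: Sard–Smale gives genericity for a single Fredholm map, not uniformly along a one-parameter family of them, and you would need a parametric transversality statement plus a compactness argument to rule out transversality failing at isolated times in a way that changes the degree. The paper instead applies Smale transversality once, to perturb the fixed map $\Phi$ into $\wPhi$ transverse to $\delta$ for the single metric $g$, and only needs to keep $\wPhi=\Phi$ near the poles where uniqueness and nondegeneracy are already established by the foliation argument (Proposition~\ref{prop:closepole}). Finally, a smaller but real difference: the paper's target is $\B\times U\S^2$ (an oriented tangent frame, i.e.\ a point plus a point of $SO(3)\cong\R P^3$) rather than your $M\times\operatorname{Gr}(2,TM)$; carrying the full frame is what makes the explicit homotopy in Proposition~\ref{prop:homotopy} work, since it lets one track the cut locus in $\R P^3$ explicitly. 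Your choice of $\operatorname{Gr}(2,TM)$ would require a separate argument for the boundary homotopy and is not obviously easier.
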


The remaining part of this section is devoted to the proof of this theorem.

Let us describe the boundary curves that we will consider. The unit tangent bundle
to $\S^2$ is $U\S^2=\{(p,v)\in \S^2\times \S^2|\la p,v\ra =0\}$. For $t\in(-1,1)$,
$(p,v)\in U\S^2$ and $e^{i\theta}\in\S^1$, we define
$P(p,v,t,e^{i\theta})=tp+\sqrt{1-t^2}(\cos \theta v+\sin\theta p\wedge v)$. When
$e^{i\theta}$ runs along $\S^1$, this describes the parallel circle
$\{q\in\S^2|\la p,q\ra=t\}$.

Thus we obtain a map:
\begin{equation}\label{eq:Phi}
\Phi: \begin{matrix}
U\S^2\times(-1,1)&\longrightarrow &C^{k,\alpha}(\S^1,\S^2)\\
(p,v,t)&\longmapsto &\zeta\mapsto P(p,v,t,\zeta)
\end{matrix}
\end{equation}

Actually $\Phi$
gives the whole family of parallel curves on $\S^2$. By changing $v$, we only change
the origin of the parametrization.

Basically, the idea of the proof of Theorem~\ref{th:prescrip} consists in considering all the
minimal disks in $M$ bounded by a parallel $\Phi(p,v,t)$ and then finding a
perturbation of one of them that satisfies the desired property.

\subsection{Study when $|t|$ is close to $1$}


In this section we prove than when $|t|$ is close to $1$ there is only one
minimal disk in $M$ bounded by $\Phi(p,v,t)$. More precisely we have the
following statement.

\begin{prop}\label{prop:closepole}
There is $\bar\eps>0$ such that, for any $(p,v)\in U\S^2$ and $1-\bar\eps\le|t|<1$, there
is a unique minimal disk $D_{p,v,t}$ in $M$ bounded by $\delta (D_{p,v,t})=\Phi(p,v,t)$.

Moreover, when $(p,v)$ are fixed, the $D_{p,v,t}$ form a foliation for
$t\in[1-\bar\eps,1)$ and $t\in(-1,-1+\bar\eps]$ by stable non-degenerate minimal disks.
$D_{p,v,t}$ can be parametrized by
$$
F(p,v,t): (x,y)\in \D\mapsto \sqrt{1-t^2}(xv+yp\wedge v)+(t+u_{p,v,t}(x,y))p
$$
where $(p,v,t)\mapsto u_{p,v,t}$ is a smooth family of $C^{k,\alpha}$ functions with
$u_{p,v,t}=(1-t)h_{p,v}+o(1-t)$ (we have the same formula close to $-1$) and $0$ boundary values.
\end{prop}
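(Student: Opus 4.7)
The plan is to reduce the problem, via an exponential chart at $p$ and a rescaling by $1/\sqrt{1-t^2}$, to a small perturbation of the Plateau problem for the unit disk in Euclidean $\R^3$, which is then solved by the implicit function theorem. The genuine work is to know \emph{a priori} that a minimal disk bounded by $\Phi(p,v,t)$ is automatically contained in a small neighborhood of $p\in\partial M$; once this localization is available, all remaining steps are standard perturbation theory.

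\textbf{Step 1 (localization).} I would first prove that there exist $\bar\eps_0>0$ and a function $\rho(t)\to 0$ as $|t|\to 1$ such that for every $(p,v)\in U\S^2$ and every $|t|\ge 1-\bar\eps_0$, every embedded minimal disk $D$ in $M$ with $\delta(D)=\Phi(p,v,t)$ is contained in the geodesic ball $B_g(p,\rho(t))$. Argue by contradiction: if a sequence $D_n$ violates the bound with $t_n\to 1$ and $(p_n,v_n)\to(p_\infty,v_\infty)$, then the boundaries $\Phi(p_n,v_n,t_n)$ converge in $C^{k,\alpha}$ to the constant map at $p_\infty$, and the area bound of \cite{Whi4} together with the curvature estimate of \cite{Whi3} (the two ingredients behind Theorem~\ref{th:properness}) extract a smooth subsequential limit that is a minimal surface in $M$ whose boundary, if non-empty, consists of the single point $p_\infty$. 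Strict mean-convexity of $\partial M$ (precluding interior tangency of the limit to $\partial M$) together with absence of closed minimal surfaces (ruling out a closed component) then force the limit to be $\{p_\infty\}$, contradicting the presence of a point of $D_n$ bounded away from $p_n$.

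\textbf{Step 2 (graph reduction and IFT).} Work in an exponential chart at $p$, identifying $T_pM\cong\R^3$ so that $T_p(\partial M)=\mathrm{span}(v,p\wedge v)$ and the outward $\partial M$-normal at $p$ is $p$. The pulled-back metric satisfies $\tilde g(x)=g_E+O(|x|^2)$ with $g_E$ Euclidean. Put $s=\sqrt{1-t^2}$ and rescale space by $1/s$; the rescaled metric $\tilde g_s$ converges smoothly on compacts to $g_E$ as $s\to 0$, and the rescaled boundary curve is, up to $o(1)$, the unit planar circle in $\mathrm{span}(v,p\wedge v)$. Seek minimal disks as vertical graphs $(x,y)\in\D\mapsto(x,y,u(x,y))$ with $u|_{\partial\D}=0$; the corresponding mean-curvature operator $\boH(u,s,p,v)$ vanishes at $(u,s)=(0,0)$, and its derivative in $u$ there is the Dirichlet Laplacian on $\D$, an isomorphism $C^{k,\alpha}_0(\D)\to C^{k-2,\alpha}(\D)$. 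The implicit function theorem then yields a smooth family of unique small solutions; unscaling produces the asserted parametrization $F(p,v,t)$, and the first-order Taylor expansion of the IFT solution gives $u_{p,v,t}=(1-t)h_{p,v}+o(1-t)$, where $h_{p,v}$ solves an explicit linear Poisson equation on $\D$ whose source term encodes the quadratic term of $\tilde g$ at $p$. Combined with Step~1, uniqueness among small graphs promotes to uniqueness among \emph{all} embedded minimal disks with boundary $\Phi(p,v,t)$.

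\textbf{Step 3 (foliation, stability, main obstacle).} For fixed $(p,v)$ the component of $\partial_t F$ normal to $D_{p,v,t}$ equals $1+o(1)$ as $t\to 1$, so the disks $\{D_{p,v,t}\}_{t\in[1-\bar\eps,1)}$ foliate a one-sided neighborhood of $p$. Stability and non-degeneracy of each $D_{p,v,t}$ are inherited from the strict stability and Dirichlet-invertibility of the flat Euclidean unit disk under the small $C^2$-perturbation of both metric and surface given by Step~2. The single substantive difficulty is the localization step: it is the only place using global information about $M$ and both hypotheses (H1) and (H2); everything else is perturbative analysis around the flat-disk limit.
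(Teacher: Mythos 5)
Your overall plan — implicit function theorem for existence and the expansion, then some a priori compactness argument to confine disks near $p$ and conclude uniqueness — is recognizably the same strategy as the paper's, and Step~1 is a reasonable substitute for the paper's argument in Remark~\ref{rk:uniq} (the paper, following White, runs the compactness argument through an area-minimizing competitor $T_i$ rather than directly on the disk itself, which is a bit safer but in the same spirit). However, there are two places where what you wrote would not go through as stated.

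\textbf{The boundary condition in Step~2.} In exponential coordinates at $p$, after rescaling by $1/s$, the image of $\partial M$ is \emph{not} the plane $\{z=0\}$ but a surface of height $O(s)$ over it, governed by the second fundamental form of $\partial M$ at $p$. Hence the rescaled boundary circle $\frac1s\Phi(p,v,t)$ sits at height $O(s)$, and the correct graph ansatz has $u|_{\partial\D}=O(s)\neq 0$. This is not a negligible nuisance: it is precisely what produces the leading coefficient $h_{p,v}$ in the expansion $u_{p,v,t}=(1-t)h_{p,v}+o(1-t)$. Indeed, in the paper's computation the forcing term $D_\beta H(p,v,0,0)$ is the difference between the mean curvature of $\S^2=\partial M$ in the metric $g$ and in the constant metric $G(p)$ — an extrinsic quantity of $\partial M$, not (as you say) ``the quadratic term of $\tilde g$ at $p$''. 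Your ansatz $u|_{\partial\D}=0$ simply erases this contribution and would produce the wrong first-order expansion. The paper's choice of adapted coordinates $M_{p,v,\beta}:\D\times\R\to\R^3$ (rather than exponential coordinates) is designed precisely so that $\partial M$ pulls back exactly to $\{z=0\}$, making the Dirichlet condition $u|_{\partial\D}=0$ literally correct.

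\textbf{The foliation claim in Step~3.} The normal component of $\partial_t F$ does not equal $1+o(1)$. From $F(p,v,t)=\sqrt{1-t^2}(xv+yp\wedge v)+(t+u_{p,v,t})p$ and $u_{p,v,t}=(1-t)h_{p,v}+o(1-t)$ with $h_{p,v}=2\lambda_{p,v}(1-(x^2+y^2))$, a direct computation (keeping track of the blowing-up tangential term $-\tfrac{t}{\sqrt{1-t^2}}(xv+yp\wedge v)$ paired against the $O(1-t)$ tilt of the normal) gives that the normal component of $\partial_t F$ tends to the constant $1-2\lambda_{p,v}$ as $t\to1$. So one still needs to know $\lambda_{p,v}<1/2$, and that is not free: it is essentially equivalent to the quantitative statement that the disks stay strictly inside $\B$, which in the paper is supplied by the comparison principle against the family of nearby mean-convex spheres $\S^2_r$. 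The paper then deduces the foliation directly from the maximum principle (the disks $F(p,v,t')(\D)$ for $t'$ close to $1$ are near $p$ and disjoint from $F(p,v,t)(\D)$, and this persists as $t'$ decreases) and gets stability and non-degeneracy as consequences of the foliation. Your inheritance argument for strict stability from the flat disk is fine, but the foliation step as written has a gap.

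Both issues are repairable — put the boundary data of the graph where it actually is (or switch to the paper's adapted chart), and run the foliation argument through the maximum principle rather than through the normal derivative — but as written the proof does not produce the correct $h_{p,v}$ nor justify the foliation.
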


\begin{proof}
First let us recall that if a Riemannian metric in $\R^3$ is given by symmetric
matrices $G$ and $u$ is a function defined on a subset of $\R^2$ then its graph
is minimal if
$$
0=\frac1{\sqrt{\det G}}\Div_{\scriptscriptstyle 3}\left(\frac{\sqrt{\det
G}}{\sqrt{^t\nabla_{\scriptscriptstyle 3} f (G^{-1})\nabla_{\scriptscriptstyle 3}
f}}G^{-1}\nabla_{\scriptscriptstyle 3} f\right) _{|(x,y,u(x,y))} 
$$
where $f$ is the function on $\R^3$ defined by $f(x,y,z)=u(x,y)-z$ and
$\Div_{\scriptscriptstyle 3}$ and $\nabla_{\scriptscriptstyle 3}$ are the
Euclidean divergence and gradient operator on $\R^3$.

For $(p,v,t)\in U\S^2\times(-1,1)$ we are looking for a minimal disk that is a
graph over the Euclidean disk bounded by $\Phi(p,v,t)$. So we are looking for a
function $u_{p,v,t}$ defined on $\D$ with vanishing boundary values such that the
surface
$$
F(p,v,t): (x,y)\in \D\mapsto \sqrt{1-t^2}(xv+yp\wedge v)+(t+u_{p,v,t}(x,y))p
$$
is in $\B^3$ and is minimal for the metric $g$. Since $t$ is close to
$1$ (the same can be done for $t$ close to $-1$), we write $t=\cos\beta$
and we look for $w_{p,v,\beta}=u_{p,v,\cos\beta}$ such that
$$
\widetilde F(p,v,\beta):(x,y)\in \D\mapsto
\sin\beta(xv+yp\wedge v)+(\cos\beta+w_{p,v,\beta}(x,y))p
$$
is in $\B^3$ and is minimal for the metric $g$. For fixed $p$, $v$, $\beta$, we
define the conformal map $M_{p,v,\beta}:\D\times\R\to \R^3 ;(x,y,z)\mapsto \sin\beta(xv+yp\wedge
v)+(\cos\beta+\sin\beta z) p$. So $\widetilde F(p,v,t)$ is
minimal if the graph of $\tilde w_{p,v,\beta}=\frac{w_{p,v,\beta}}{\sin\beta}$ in
$\D\times\R$ is minimal for the metric
$g_{p,v,\beta}=\frac{M_{p,v,\beta}^*g}{\sin^2\beta}$. As explained
above if $G$ is the matrix of $g$ in $\R^3$, the graph of $\tilde w$ in $\D\times\R$ is
minimal for $g_{p,v,\beta}$ if
\begin{multline*}
0=\frac1{\sqrt{\det G_{p,v,\beta}}}\Div_{\scriptscriptstyle
3}\left(\frac{\sqrt{\det G_{p,v,\beta}}}{\sqrt{^t\nabla_{\scriptscriptstyle 3}
f (G_{p,v,\beta}^{-1})\nabla_{\scriptscriptstyle 3}
f}}G_{p,v,\beta}^{-1}\nabla_{\scriptscriptstyle 3} f\right)_{|(x,y,\tilde w(x,y))}\\
=H(p,v,\beta,\tilde w)(x,y)
\end{multline*}
where $f(x,y,z)=\tilde w(x,y)-z$ and $G_{p,v,\beta}=(v,p\wedge v,p)^*(G\circ
M_{p,v,\beta})(v,p\wedge v,p)$ where $(v,p\wedge v,p)$ is the matrix whose
columns are $v$, $p\wedge v$ and $p$.

So one can consider the map
$$
H:U\S^2\times(-\eps,\eps)\times C_0^{k,\alpha}(\D)\to C^{k-2,\alpha}(\D);(p,v,\beta,\tilde w)\mapsto
H(p,v,\beta,\tilde w)
$$
where $C_0^{k,\alpha}(\D)$ is the subset of $C^{k,\alpha}$-functions vanishing on
$\partial\D$. We want to solve $H(p,v,\beta,\tilde w)=0$.

Let us remark that for $\beta=0$, $H(p,v,0,\tilde w)$ computes the mean curvature of
the graph of $\tilde w$ in the constant metric $G_{p,v,0}= (v,p\wedge
v,p)^*G(p)(v,p\wedge v,p)$. So the function $\tilde w\equiv 0$ is a solution,
$H(p,v,0,0)=0$. Besides we have
$$
D_{\tilde w}H(p,v,0,0)(h)=\frac1{(G_{p,v,0}^{33})^{3/2}} \sum_{i,j=1}^2
(G_{p,v,0}^{ij}G_{p,v,0}^{33}- G_{p,v,0}^{i3}G_{p,v,0}^{j3})h_{ij}
$$
where the $G_{p,v,0}^{ij}$ are the coefficients of $G_{p,v,0}^{-1}$. This
operator is elliptic with constant coefficients so it is invertible from
$C_0^{k,\alpha}(\D)$ to $C^{k-2,\alpha}(\D)$ and the implicit function theorems applies.
So there is a family of functions $\tilde w_{p,v,\beta}$ ($\beta$ close to $0$) which solves
$H(p,v,\beta,\tilde w_{p,v,\beta})=0$ and $\tilde w_{p,v,0}=0$. Moreover we have
$\tilde w_{p,v,\beta}=\beta\tilde h_{p,v}+o(\beta)$.

Thus the family $(u_{p,v,t})$ is well defined for any $(p,v)\in U\S^2$ and
$t\in(1-\eps,1)$. Using that $t=\cos \beta$, we obtain
$u_{p,v,t}=(1-t)\tilde h_{p,v}+o(1-t)$. It still remains to prove that these
disks are in $\B$ and form local foliations. By (H1), we notice that in the
metric $g$ the
Euclidean spheres $\S_r^2$ of radius $r$ are mean convex for
$r\in[1-\delta,1+\delta)$. Since the $F(p,v,1)(\D)=p$, for $t$ close to $1$,
$F(p,v,t)(\D)$ is contained in the part of $\R^3$ foliated by the $\S_r^2$
($1-\delta< r<1+\delta$). Since $\partial (F(p,v,t)(\D))\subset \S^2$, the comparaison principle implies
$F(p,v,t)(\D)\subset \B$ for any $(p,v,t)\in U\S^2\times (1-\eps,1)$. 

If $t<t'<1$, the boundary curve $\Phi(p,v,t')(\S^1)$ is between
$\Phi(p,v,t)(\S^1)$ and $p$. Besides, for $t'$ close to $1$, $F(p,v,t')(\D)$ is
close to $p$ and does not intersect $F(p,v,t)(\D)$. So by the maximum principle
$F(p,v,t')(\D)$ does not intersect $F(p,v,t)(\D)$ for any $1-\eps<t<t'<1$. Thus
at fixed $(p,v)$, the minimal disks $\{F(p,v,t)(\D)\}_{1-\eps<t<1}$ form a
foliation. As a consequence, these minimal disks are stable, non degenerate and
moreover they are the unique minimal disk with boundary $\Phi(p,v,t)(\S^1)$ at
least for $1-\bar\eps\le t<1$ ($\bar\eps<\eps$). This uniqueness statement comes from the
work of White in \cite{Whi2} (the arguments start at the bottom of page 148,
notice that here we have already constructed a foliation, see also Remark~\ref{rk:uniq} below).
\end{proof}

\begin{remarq}
Actually the function $\tilde h_{p,v}$ can be computed by the equation
$0=D_\beta H(p,v,0,0)+D_{\tilde w}H(p,v,0,0)(\tilde h_{p,v})$ that can be solved
since $D_{\tilde w}H(p,v,0,0)$ is invertible. Moreover, $D_\beta H(p,v,0,0)$ can
be computed and is given by the difference of the mean curvatures at $ p$ of
$\S^2\subset (M,g)$ (which is positive) and $\S^2\subset \R^3$ endowed with the
constant metric $G(p)$ (which is also positive). So $D_\beta H(p,v,0,0)$ is a
constant function on $\D$ and $\tilde h_{p,v}(x,y)=\lambda (1-(x^2+y^2))$ for
some $\lambda$.
\end{remarq}

\begin{remarq}\label{rk:uniq} For the readers benefit, we give the arguments of White
concerning uniqueness. So we fix $(\bar p,\bar
v)\in U\S^2$. We know that for any $(p,v)$ there is an open neighborhood
$(U_{p,v})$ of $p$ foliated by the minimal disks $\{F(p,v,t)(\D)\}_{1-\eps<t<1}=\{D_{p,v,t}\}_{1-\eps<t<1}$.
Let $(p_i,v_i,t_i)\to (\bar p,\bar v,1)$. Let $R_i$ be a minimal
surface in $M$ with $\partial R_i=\Phi(p_i,v_i,t_i)(\S^1)$ with $R_i\neq D_{p_i,v_i,t_i}$.

Let $T_i$ be a minimal disk with boundary $\Phi(p_i,v_i,t_i)(\S^1)$ that
minimizes the area in the connected component of $M\setminus R_i$ that contains
$-p_i$. $T_i$ is an embedded minimal disk and $area(T_i)\le area(\partial
M)$. 

Unless $area (T_i)\to 0$, by Theorem~3 in \cite{Whi3}, $T_i$ converges to a
minimal surface $T$ with $\partial T=\bar p$ and $T$ is smooth outside $\bar p$.
By Theorem~2 in \cite{Whi3}, $T$ is regular even at $\bar p$. So this is
impossible since $\partial M$ is strictly mean convex.

So $area (T_i)\to 0$ and, by the monotonicity formula, $T_i$ can't contain points
outside of $U_{p_i,v_i}$ so the same is true for $R_i$. Since the minimal disks
$\{D_{p_i,v_i,t}\}_{1-\eps<t<1}$ foliate $U_{p_i,v_i}$ this implies that
$R_i=D_{p_i,v_i,t_i}$ by the maximum principle.
\end{remarq}


\subsection{The family of minimal disks}


In this section we are going to prove the following result.

\begin{prop}\label{prop:param}
There is a connected proper $4$-dimensional submanifold $\Sigma\subset \boM$ that contains all
the minimal disks $D_{p,v,t}$ for any $(p,v)$ and $1-\bar\eps\le|t|<1$.
Moreover, for any $\sigma\in \Sigma$, $\delta(\sigma)$ is close to some
$\Phi(p,v,t)$.

Besides there is a map $F:\Sigma\to C^{2,\alpha}(\D,\B)$, continuous in the $C^2$ topology, such that
$\sigma=[F(\sigma)]$.
\end{prop}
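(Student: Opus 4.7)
My plan is to define $\Sigma$ as a connected component of the preimage under $\delta$ of a small smooth perturbation $\wtilde\Phi$ of the four-parameter map $\Phi$, invoking parametric Sard-Smale, properness of $\delta$, and the uniqueness statement at the poles. First I would observe that $\Phi$ is a smooth embedding of $U\S^2\times(-1,1)$ into $C^{k,\alpha}(\S^1,\partial M)$, since from the image curve $\Phi(p,v,t)(\S^1)$ one recovers the axis, the latitude, and the starting direction. Near the poles, Proposition~\ref{prop:closepole} says $D_{p,v,t}$ is stable and non-degenerate, so $D\delta$ at $D_{p,v,t}$ is an isomorphism (Fredholm of index $0$ with trivial kernel), and $\Phi$ is automatically transverse to $\delta$ there.

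The parametric transversality theorem for the Fredholm map $\delta$ then produces a smooth map $\wtilde\Phi:U\S^2\times(-1,1)\to C^{k,\alpha}(\S^1,\partial M)$ arbitrarily close to $\Phi$, coinciding with $\Phi$ on $\{|t|\ge 1-\bar\eps\}$ (using a cutoff on the perturbation where $\Phi$ is already transverse), still an embedding, and transverse to $\delta$. By the implicit function theorem
\[
\Sigma_0:=\delta^{-1}\bigl(\wtilde\Phi(U\S^2\times(-1,1))\bigr)
\]
is a smooth submanifold of $\boM$ of dimension $4+\mathrm{ind}(\delta)=4$, and every disk $D_{p,v,t}$ with $|t|\ge 1-\bar\eps$ lies in $\Sigma_0$. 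By Theorem~\ref{th:properness}, the restriction $\delta\colon\Sigma_0\to\wtilde\Phi(U\S^2\times(-1,1))$ is a proper local diffeomorphism, hence a covering map, and by construction the boundary $\delta(\sigma)=\wtilde\Phi(p,v,t)$ of any $\sigma\in\Sigma_0$ is close to $\Phi(p,v,t)$.

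I would take $\Sigma$ to be the connected component of $\Sigma_0$ containing a chosen $D_{p_0,v_0,t_0}$ with $t_0$ close to $1$. Continuity of $(p,v,t)\mapsto D_{p,v,t}$ together with connectedness of $U\S^2\times[1-\bar\eps,1)$ places every $D_{p,v,t}$ with $t\ge 1-\bar\eps$ in $\Sigma$; lifting the path $s\in[-t_0,t_0]\mapsto\wtilde\Phi(p_0,v_0,s)$ via the covering produces a path in $\Sigma$ ending at the unique preimage of $\wtilde\Phi(p_0,v_0,-t_0)$, which by Proposition~\ref{prop:closepole} must be $D_{p_0,v_0,-t_0}$, so both polar regions lie in $\Sigma$. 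Properness of the embedding $\Sigma\hookrightarrow\boM$ reduces to closedness: a limit $\sigma\in\boM$ of a sequence in $\Sigma$ satisfies $\delta(\sigma)=\lim\wtilde\Phi(p_n,v_n,t_n)$, and the parameters cannot accumulate at $|t|=1$ since that would force $\delta(\sigma)$ to be a constant curve, incompatible with $\sigma\in\boM$. The section $F:\Sigma\to C^{2,\alpha}(\D,\B)$ is finally obtained from the local graph charts of Theorem~3.3 in~\cite{Whi}, which supply smooth $C^{k,\alpha}$ representatives near any fixed reference disk; these glue via a partition of unity on $\Sigma$, and continuity in the weaker $C^2$ topology is automatic.

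I expect the principal obstacle to be the parametric transversality step, where one must engineer a perturbation $\wtilde\Phi$ that simultaneously equals $\Phi$ near the poles (to preserve the existing pole disks as exact preimages) and is globally transverse to $\delta$; this requires working in a Banach space of perturbations vanishing near $\{|t|\ge 1-\bar\eps\}$ and checking the Sard-Smale hypotheses in that setting. The connectedness argument is a secondary but delicate point, coupling the covering property of $\delta|_{\Sigma_0}$ to the uniqueness supplied by Proposition~\ref{prop:closepole}.
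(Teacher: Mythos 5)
Your overall strategy — perturb $\Phi$ to $\wtilde\Phi$, take $\delta^{-1}(\wtilde\Phi(U\S^2\times(-1,1)))$, and select a connected component — is the same as the paper's, but two of your steps have genuine gaps.

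First, the claim that $\delta\colon\Sigma_0\to\wtilde\Phi(U\S^2\times(-1,1))$ is a \emph{local diffeomorphism} (hence a covering) is false. Transversality of $\wtilde\Phi$ to $\delta$ gives $\operatorname{im}D\delta_\sigma + T\,\wtilde\Phi(U\S^2\times(-1,1)) = T\,C^{2,\alpha}(\S^1,\partial M)$, which makes $\Sigma_0$ a $4$-manifold, but it does \emph{not} make $D\delta_\sigma|_{T_\sigma\Sigma_0}$ injective or surjective onto $T\,\wtilde\Phi(U\S^2\times(-1,1))$. At degenerate minimal disks, $\delta|_{\Sigma_0}$ has fold-type critical points (compare the map $\pi$ in the paper, which is never claimed to be a local diffeomorphism). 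With the covering structure gone, your path-lifting argument for connectedness collapses. The paper resolves this by a two-step transversality: first perturb $\Phi$ only on the $1$-parameter slice $\{(p_0,v_0)\}\times(-1,1)$ to get a proper $1$-submanifold $\gamma$ running from disks near $t=1$ to disks near $t=-1$ (here properness of a $1$-manifold, not a covering argument, forces $\gamma$ to connect the two polar regions), and only then perturb over all of $U\S^2\times(-1,1)$ so that $\gamma$ remains inside the resulting $4$-manifold and forces the single component $\Sigma$ to contain both families of polar disks.

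Second, the section $F$ cannot be produced by a partition of unity. The local charts of White give parametrizations $X_i(\sigma)$ which represent the same disk $\sigma$ up to precomposition by elements of $\diff(\D,\partial\D)$; a convex combination $\sum\varphi_i(\sigma)X_i(\sigma)$ is not an equivalent reparametrization and need not even be an immersion. The genuine obstruction is topological: one must extend a continuous $\diff_{2,\alpha}(\D,\partial\D)$-valued map from the boundary of a cell to its interior, which requires knowing that the homotopy groups of $\diff_{2,\alpha}(\D,\partial\D)$ vanish. The paper handles this with a cell-by-cell induction over a triangulation of $\Sigma_0$ and Smale's theorem that $\diff_\infty(\D,\partial\D)$ is contractible, together with the Appendix's argument transferring this to the $C^{2,\alpha}$ setting. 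You would need to supply this argument rather than a partition of unity.
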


Actually $\delta(\sigma)$ can be chosen as close to $\Phi(p,v,t)$ as we want.

First we recall that $\delta:\boM \to C^{2,\alpha}(\S^1,\S^2)$ is proper by Theorem~\ref{th:properness}.


The map $\Phi$ is an embedding of $U\S^2\times (-1,1)$ in
$C^{2,\alpha}(\S^1,\S^2)$. Moreover, by Proposition~\ref{prop:closepole}, when
$1-\bar\eps\le |t|<1$, there is a unique minimal disk $\sigma\in\boM$ such
that $\delta(\sigma)=\Phi(p,v,t)$ and $\sigma$ is non degenerate. So when $t$ is
close to $\pm 1$, $\Phi$ is transverse to the boundary map $\delta$. 

Let us fix $(p_0,v_0)\in U\S^2$. By the Smale transversality theorem \cite{Sma},
there exists $\Phi' : (p_0,v_0)\times(-1,1)\to
C^{2,\alpha}(\S^1,\S^2)$, a perturbation of $\Phi$ on $(p_0,v_0)\times(-1,1)$,
which is transverse to $\delta$. Moreover we can assume that
$\Phi=\Phi'$ when $1-\bar\eps\le|t|<1$.

Thus $\delta^{-1}(\Phi'((p_0,v_0)\times (-1,1)))$ is a submanifold of $\boM$.
Since $\delta$ is proper,
$\delta^{-1}(\Phi'((p_0,v_0)\times (-1,1)))$ is a proper $1$-dimensional
submanifold of $\boM$. Let $\gamma$ be its component that contains
$D_{p_0,v_0,t}$ for $t$ close to $1$. Since $\delta^{-1}(\Phi'((p_0,v_0)\times
(-1,1)))$ is proper, it must contain also
$D_{p_0,v_0,t}$ for $t$ close to $-1$. 

Since $\Phi'$ is a perturbation of $\Phi$ on $(p_0,v_0)\times(-1,1)$, we can
extend its definition to the whole
$U\S^2\times(-1,1)$ as a perturbation of $\Phi$ such that $\Phi'$ is still an
embedding. We notice that a priori $\Phi'$ is not transverse to $\delta$ on
$U\S^2\times(-1,1)$ but it is the case on $(p_0,v_0)\times(-1,1)$ and
$(U\S^2\times ((-1,-1+\bar\eps]\cup[1-\bar\eps,1))$ where we choose
$\Phi'=\Phi$. The Smale transversality theorem implies
there is a perturbation $\wPhi$ of $\Phi'$ on the whole $U\S^2\times(-1,1)$
which is transverse to $\delta$ and coincides with $\Phi'$ on $(p_0,v_0)\times(-1,1)$ and
$(U\S^2\times ((-1,-1+\bar\eps]\cup[1-\bar\eps,1))$.

So $\delta^{-1}(\wPhi(U\S^2\times (-1,1)))$ is a proper $4$-dimensional submanifold of
$\boM$. Inside it, we only consider the connected component $\Sigma$ that
contains $D_{p,v,t}$ for any $(p,v)$ and $t$ close to $\pm1$ ($\Sigma$ exists
since $\gamma\subset\Sigma$).

We denote by $\pi$ the map $\wPhi^{-1}\circ \delta : \Sigma\to
U\S^2\times(-1,1)$. For $t$ close to $\pm 1$ we know that $\pi$ is a bijection.
Let us define
$$
\Sigma_\pm=\delta^{-1}\big(\Phi(U\S^2\times[\pm(1-\bar\eps),\pm1))\big)=\left\{D_{p,v,t};(p,v,t)\in
U\S^2\times[\pm(1-\bar\eps),\pm1)\right\}
$$
Let us also define $\Sigma_0=\delta^{-1}\big(\wPhi(U\S^2\times[-1+\bar\eps,1-\bar\eps])\big)$.

So let us construct the map $F$ that parametrizes all these minimal disks. First
when $\sigma\in\Sigma_\pm$, we know that we can identify $\sigma$ with $(p,v,t)=\pi(\sigma)$ and we define:
$$
F(\sigma)=F(p,v,t): (x,y)\in \D\mapsto \sqrt{1-t^2}(xv+yp\wedge v)+(t+u_{p,v,t}(x,y))p
$$
as in Proposition~\ref{prop:closepole}. So we need to define $F(\sigma)$ when
$\sigma\in\Sigma_0$.

\begin{lem}\label{lem:smale}
There is a map $F:\Sigma\to C^{2,\alpha}(\D,\B)$ continuous in the $C^2$ topology which coincides with
the above definition on $\Sigma_\pm$ such that $\sigma=[F(\sigma)]$. We have
$F(\sigma)_{|\partial\D}=\wPhi(\pi(\sigma))$.
\end{lem}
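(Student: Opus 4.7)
The plan is to combine the explicit graphical parametrization on $\Sigma_\pm$ coming from Proposition~\ref{prop:closepole} with local sections of the quotient map $\widetilde{\boM}\to \boM$ extracted from White's chart theorem, and to glue them together using reparametrizations by elements of $\diff_{k,\alpha}(\D,\partial \D)$.

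The first observation is that $\Sigma_0$ is compact: indeed $\wPhi(U\S^2\times[-1+\bar\eps,1-\bar\eps])$ is compact in $C^{k,\alpha}(\S^1,\partial M)$, and $\delta$ is proper by Theorem~\ref{th:properness}, so $\Sigma_0$ is a closed compact subset of $\Sigma$. At each $\sigma_0\in \Sigma$, White's chart theorem (\cite[Theorem~3.3]{Whi}, whose construction is recalled in the discussion around Proposition~\ref{prop:param}) provides an open neighborhood $U_{\sigma_0}\subset \Sigma$ together with a smooth section $F_{\sigma_0}:U_{\sigma_0}\to C^{k,\alpha}(\D,\B)$ satisfying $[F_{\sigma_0}(\sigma)]=\sigma$. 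Both $F_{\sigma_0}(\sigma)|_{\partial\D}$ and $\wPhi(\pi(\sigma))$ parametrize the same embedded boundary curve, so they differ by a diffeomorphism of $\S^1$ depending continuously on $\sigma$; extending this diffeomorphism into $\D$ using a collar coordinate and precomposing $F_{\sigma_0}$ with it, I may arrange $F_{\sigma_0}(\sigma)|_{\partial\D}=\wPhi(\pi(\sigma))$. By compactness of $\Sigma_0$ a finite subcover $U_1,\dots,U_N$ suffices, which I take together with the open set $\Sigma_\pm$ where $F$ is already given by Proposition~\ref{prop:closepole}.

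The gluing is inductive. On any overlap of two charts (or of a chart with $\Sigma_\pm$) the two sections are related by $F_i(\sigma)=F_j(\sigma)\circ \phi_{ij}(\sigma)$ for a unique $\phi_{ij}(\sigma)\in \diff_{k,\alpha}(\D,\partial\D)$; continuity of $\sigma\mapsto \phi_{ij}(\sigma)$ follows from the uniqueness statement of Appendix~\ref{appendix} cited in Section~\ref{sec:white}. To merge two sections over a chart, after passing to a sufficiently fine cover so that $\phi_{ij}(\sigma)$ stays close to the identity, I would represent it as the time-one map of the flow of a $C^{k,\alpha}$ vector field on $\D$ vanishing on $\partial\D$, and then interpolate via the time-$t$ flow with $t=\chi(\sigma)$ for a cutoff function $\chi$ going from $0$ to $1$. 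Iterating across the finite cover while preserving the values already prescribed on $\Sigma_\pm$ yields a global continuous $F:\Sigma\to C^{k,\alpha}(\D,\B)\subset C^{2,\alpha}(\D,\B)$ with $F(\sigma)|_{\partial\D}=\wPhi(\pi(\sigma))$.

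The main obstacle is this interpolation step. Unlike maps $\D\to \B$, diffeomorphisms of $\D$ do not form a vector space, and the naive convex combination of two parametrizations of the same disk fails to parametrize it. The flow-of-vector-fields construction converts the problem into a linear one at the cost of requiring each $\phi_{ij}(\sigma)$ to be close to the identity, which forces the finite-cover/induction approach. The underlying topological input is the path-connectedness of $\diff_{k,\alpha}(\D,\partial\D)$, which is classical and can be obtained by flowing along radial vector fields. The continuity of the resulting $F$ in the $C^2$ topology follows since all constructions are continuous at the level of $C^{k,\alpha}$, and $C^2$ is a coarser topology.
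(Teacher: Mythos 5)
Your overall plan---compactness of $\Sigma_0$, local continuous sections from White's chart theorem, compatibility with the given graphical parametrization on $\Sigma_\pm$, and a gluing step using $\diff_{2,\alpha}(\D,\partial\D)$---matches the paper's. The gluing mechanism, however, is genuinely different (a finite-cover, Čech-with-cutoffs iteration versus the paper's cell-by-cell extension over a triangulation), and as written it has gaps that I do not think can be repaired without in effect reverting to the paper's argument.

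The most serious gap is the step ``after passing to a sufficiently fine cover so that $\phi_{ij}(\sigma)$ stays close to the identity.'' The transition maps $\phi_{ij}$ depend on the sections $F_i$ you have chosen, not on the diameter of the $U_i$; refining the cover makes each $F_i$ vary little over $U_i$, but it does nothing to the fact that at a common point $\sigma_0$ the two reference parametrizations $F_i(\sigma_0)$ and $F_j(\sigma_0)$ of the same disk may differ by a large element of $\diff_{2,\alpha}(\D,\partial\D)$. So the smallness hypothesis on which the whole flow interpolation rests is not achievable by refinement, and the inductive merging (together with the cumulative drift over a finite but uncontrolled number of merges) never gets off the ground. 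A second gap is the closing remark that the ``underlying topological input is the path-connectedness of $\diff_{k,\alpha}(\D,\partial\D)$, which is classical and can be obtained by flowing along radial vector fields.'' The radial/Alexander flow contracts the group of \emph{homeomorphisms}; it is not even $C^1$-continuous at time zero, so it gives nothing in the $C^{k,\alpha}$ category. What is actually used in the paper is $\pi_p\big(\diff_{2,\alpha}(\D,\partial\D)\big)=0$ for all $p\le 3$, i.e.\ connectivity through the dimension of $\Sigma_0$, which comes from Smale's contractibility theorem \cite{Sma2} together with the regularity and heat-flow arguments of Appendix~\ref{appendix}; mere $\pi_0=0$ does not suffice once you have to fill in cells of dimension up to four. (A minor further point: representing a $C^{k,\alpha}$ diffeomorphism near the identity as a time-one flow of a vector field is delicate, since the exponential map on $\diff$ is not locally surjective; the natural interpolation is the straight-line one $(1-t)\,\mathrm{id}+t\phi$, using convexity of $\D$.) The paper avoids all of these issues by triangulating $\Sigma_0$ so that each $4$-cell lies in one of White's charts, fixing $F$ on the $0$-skeleton and on $\partial\Sigma_0$, and at each inductive step extending a continuous $Z:\partial e\to\diff_{2,\alpha}(\D,\partial\D)$ across a single $(p+1)$-cell $e$, with no smallness hypothesis on $Z$; the extension exists precisely because $\pi_p$ vanishes. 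You should replace the cover-and-cutoff iteration with this skeleton induction.
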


\begin{proof}
First we notice that $F$ is well defined on $\partial\Sigma_0$. Let us recall
what are the charts around $\bar\sigma\in\boM$. By Theorem~3.3 and Section~8 in \cite{Whi}, if
$\ker D\delta(\bar \sigma)$ has dimension $j$ (the
set of Jacobi fields on $\bar \sigma$ that vanish on the boundary has dimension $j$),
there is a neighborhood of $\bar \sigma$ in $\boM$ that can be identified with a submanifold in
$C^{2,\alpha}(\partial \D,\partial M)\times \R^j$ of codimension $j$ (such that
$\delta$ correspond to the the projection on the first factor). Moreover there
is $H:C^{2,\alpha}(\partial \D,\partial M)\times \R^j\to C^{2,\alpha}(\D,M)$
such that its restriction to the submanifold gives a parametrization of any
minimal surface in a neighborhood of $\bar \sigma$.

Let $Y=Y^0\subset \cdots\subset Y^4$ be a triangulation of $\Sigma_0$ such that
each $4$-cell is contained in one of the above charts of
$\boM$ that are diffeomorphic to a ball. First for any $\sigma$ in $Y^0$, we
choose a parametrization $F(\sigma)$ of $\sigma$ such that, if $\sigma\in
Y^0\cap \partial\Sigma_0$, $F(\sigma)$ coincides with the preceding definition of $F$.

By induction, let us assume that $F:Y^p\to C^{2,\alpha}(\D,\B)$ continuous in
the $C^2$ topology is defined and consider $e$ a $p+1$-cell in $Y^{p+1}$.
If $e$ belongs to $\partial \Sigma_0$, we extend $F$ by the preceding
definition. If not, $e$ belongs to an above chart of $\boM$. So combining the
inclusion of $e$ in the submanifold of $C^{2,\alpha}(\partial\D, \partial M)\times
\R^j$ with $H$, there is a
continuous $X:e\to C^{2,\alpha}(\D,\B)$ such that $[X(\sigma)]=\sigma$. Besides
there is a continuous map $Z:\partial e\to
\diff_{2,\alpha}(\D,\partial\D)$ such that $F(\sigma)=X(\sigma)\circ Z(\sigma)$. Since
$\diff_\infty(\D,\partial\D)$ is contractible (cf. \cite{Sma2} and by
Appendix~\ref{appendix}), $Z$ can be extended
continuously (for the $C^2$ topology) in $\diff_{2,\alpha}(\D,\partial \D)$ to the
whole $e$. We then extend $F$ by $F(\sigma)=X(\sigma)\circ Z(\sigma)\in C^{2,\alpha}(\D,\B)$.
\end{proof}

The proof of Proposition~\ref{prop:param} is finished.


\subsection{Rectifying the parametrization}


From now on, we will forget about the metric $g$ on $\B$, so $\{F(\sigma)\}_{\sigma\in\Sigma}$
is just a family of embedded disks in $\B^3$ with the Euclidean metric.
Actually, the parametrization of
$\sigma$ by $F(\sigma)$ is not good for what we are going to do next. We use the
following result.

\begin{prop}\label{prop:confboundary}
There is a continuous map $Y:\Sigma\to \diff_1(\D,\partial\D)$ such that $F(\sigma)\circ Y(\sigma)$
is
conformal along $\partial \D$. Moreover, if $\sigma\in\Sigma_\pm$, $\lim
Y(\sigma)=\id$ when $t\to \pm 1$.
\end{prop}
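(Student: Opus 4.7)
The plan is to first convert the requirement ``$F(\sigma)\circ Y(\sigma)$ conformal on $\partial\D$'' into a pointwise prescription of the boundary $1$-jet of $Y(\sigma)$, and then to realize such a $Y(\sigma)$ globally on $\D$ as the time-$1$ flow of an explicit continuously parametrized vector field that vanishes on $\partial\D$.

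First I would compute the pointwise condition. Because $Y(\sigma)$ restricts to the identity on $\partial\D$, its differential at $e^{i\theta}$ satisfies $DY_{e^{i\theta}}(\partial_\theta)=\partial_\theta$, leaving the only freedom as $DY_{e^{i\theta}}(\partial_r)=a_\sigma(\theta)\,\partial_r+b_\sigma(\theta)\,\partial_\theta$ with $a_\sigma>0$. Writing $F_r,F_\theta$ for the Euclidean partial derivatives of $F(\sigma)$ at $(1,\theta)$ and imposing $aF_r+bF_\theta\perp F_\theta$ with $|aF_r+bF_\theta|=|F_\theta|$, a direct calculation yields
$$
a_\sigma(\theta)=\frac{|F_\theta|^2}{J_\sigma(\theta)},\qquad b_\sigma(\theta)=-\frac{\langle F_r,F_\theta\rangle}{J_\sigma(\theta)},\qquad J_\sigma:=\sqrt{|F_r|^2|F_\theta|^2-\langle F_r,F_\theta\rangle^2}>0.
$$
Since $F\colon\Sigma\to C^{2,\alpha}(\D,\B)$ is continuous in the $C^2$ topology, the pair $(a_\sigma,b_\sigma)$ depends continuously on $\sigma$ in $C^1(\partial\D)$.

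Next I would build $Y(\sigma)$ as a time-$1$ flow. Fix a smooth cut-off $\chi\colon[0,\infty)\to[0,1]$ with $\chi(0)=1$ and $\chi\equiv 0$ on $[1/2,\infty)$, and set
$$
X_\sigma(r,\theta)=(r-1)\chi(1-r)\Bigl[\log a_\sigma(\theta)\,\partial_r+\mu\bigl(a_\sigma(\theta)\bigr)\,b_\sigma(\theta)\,\partial_\theta\Bigr],\qquad \mu(a)=\frac{\log a}{a-1},
$$
with $\mu(1):=1$. Then $X_\sigma$ is a $C^1$ vector field on $\D$ vanishing on $\partial\D$, depending continuously on $\sigma$. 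A routine ODE argument gives $DY(\sigma)_{e^{i\theta}}=\exp(DX_\sigma(e^{i\theta}))$, and a direct calculation of the matrix exponential of a $2\times2$ strictly lower triangular matrix with $0$ on the $(2,2)$ entry shows that the coefficients in $X_\sigma$ have been calibrated so that this exponential is exactly $\bigl(\begin{smallmatrix}a_\sigma(\theta)&0\\ b_\sigma(\theta)&1\end{smallmatrix}\bigr)$ in the $(\partial_r,\partial_\theta)$ basis. Hence $Y(\sigma):=\phi^{X_\sigma}_1\in\diff_1(\D,\partial\D)$ makes $F(\sigma)\circ Y(\sigma)$ conformal along $\partial\D$, and $\sigma\mapsto Y(\sigma)$ is continuous in the $C^1$ topology because $X\mapsto \phi^X_1$ is continuous from $C^1$-vector fields to $C^1$-diffeomorphisms.

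Finally, to verify the limit on $\Sigma_\pm$ I would plug the parametrization of Proposition~\ref{prop:closepole} into the formulas above: orthonormality of $(v,p\wedge v,p)$ and the vanishing of $u_{p,v,t}$ on $\partial\D$ give $\langle F_r,F_\theta\rangle\equiv 0$ on $\{r=1\}$, hence $b_\sigma\equiv 0$; the identities $|F_\theta|^2=1-t^2$ and $|F_r|^2=(1-t^2)+(\partial_r u_{p,v,t}|_{r=1})^2$ together with $u_{p,v,t}=(1-t)h_{p,v}+o(1-t)$ then give $a_\sigma=1+O(1-|t|)$ uniformly in $\theta$. Thus $X_\sigma\to 0$ in $C^1$ and $Y(\sigma)\to\id$ as $|t|\to 1$. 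The main obstacle I foresee is ensuring global injectivity of $Y(\sigma)$ for every $\sigma\in\Sigma$: a naive ``stretch near the boundary'' formula realizing the same $1$-jet fails to be a diffeomorphism whenever $|a_\sigma-1|$ or $|b_\sigma|$ is too large, and the flow reformulation is chosen precisely to sidestep this, since time-$1$ flows of $C^1$ vector fields vanishing on $\partial\D$ are automatically diffeomorphisms of $\D$ fixing $\partial\D$, without any smallness assumption on $(a_\sigma,b_\sigma)$.
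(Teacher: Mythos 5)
Your proof is correct, and it takes a genuinely different route from the paper. You and the paper agree on the first reduction: conformality of $F(\sigma)\circ Y(\sigma)$ on $\partial\D$ is a condition only on the $1$-jet of $Y(\sigma)$ at the boundary, namely $DY(\partial_\theta)=\partial_\theta$ (forced) and $DY(\partial_r)=a_\sigma\partial_r+b_\sigma\partial_\theta$ with your explicit $(a_\sigma,b_\sigma)$, equivalently the paper's $\partial_r Y=X(\sigma)$ with $X(\sigma)=DF(\sigma)^{-1}(r_{-\pi/2}\partial_\theta F(\sigma))$. The divergence is in how to realize such a prescribed boundary jet globally as an element of $\diff_1(\D,\partial\D)$. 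The paper writes down the concrete interpolation $Y=r\phi\, e_r+(1-\phi)(e_r+(r-1)X)$ with a carefully engineered radial cut-off $\phi$, and then proves the Jacobian is positive by a case analysis on $\phi+(r-1)\phi'$ and $\lambda\cos\alpha$; this needs uniform two-sided bounds on $\lambda=|X|$ and $\lambda\cos\alpha=\langle X,e_r\rangle$ (hence the boundary expansion near $t=\pm1$ and an implicit uniformity over $\Sigma$) in order to choose $r_0$ and $\psi$ once and for all. You instead take the time-$1$ flow of a vector field $X_\sigma=(r-1)\chi(1-r)[\log a_\sigma\,\partial_r+\mu(a_\sigma)b_\sigma\,\partial_\theta]$ vanishing on $\partial\D$; injectivity is then automatic (flows of $C^1$ fields fixing the boundary are diffeomorphisms with no smallness hypothesis), and the whole work is front-loaded into calibrating the linearization so that $\exp(DX_\sigma|_{e^{i\theta}})=\left(\begin{smallmatrix}a_\sigma&0\\ b_\sigma&1\end{smallmatrix}\right)$, which your computation of the exponential of a lower-triangular nilpotent-plus-diagonal $2\times2$ matrix confirms. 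Your approach buys you a clean avoidance of the Jacobian estimate and of the need for uniform bounds on $(a_\sigma,b_\sigma)$; the paper's buys an entirely elementary, ODE-free construction. Two small write-up points: to conclude $X_\sigma\to 0$ in $C^1$ on $\Sigma_\pm$ you should say explicitly that $a_\sigma\to 1$, $b_\sigma\to 0$ in $C^1(\partial\D)$ (not just $C^0$), which follows from the smoothness of $(p,v,t)\mapsto u_{p,v,t}$ in Proposition~\ref{prop:closepole}; and you should note that $X_\sigma$ is supported in $\{r\ge 1/2\}$ so the polar-coordinate formula defines an honest $C^1$ field on all of $\D$.
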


\begin{proof}
Actually the proof is based on the following statement: let us consider
$X:\S^1\to \R^2$ a $C^1$ vectorfield such that $\la X,e_r\ra>0$ then there is $Y\in
\diff_1(\D,\partial \D)$ such that $\partial_r Y=X$ on $\partial \D$.

In order to construct $Y$, let $r_0\in(0,1/2)$ and $\phi:[0,1]\to[0,1]$ be a non-increasing function such
that $\phi=1$ on $[0,1-r_0]$, $\phi(1)=0$ and $\phi'(1)=0$. We define
$$
Y(r,\theta)=r\phi e_r+(1-\phi)(e_r+(r-1)X)
$$
We also write $X=\lambda(\cos\alpha e_r+\sin\alpha e_\theta)$ with $\lambda>0$
and $\alpha\in(-\pi/2,\pi/2)$. Thus
\begin{align*}
\partial_rY=&(\phi+r\phi')e_r-\phi'(e_r+(r-1)\lambda(\cos \alpha e_r+\sin\alpha e_\theta))\\
& +(1-\phi)\lambda(\cos \alpha e_r+\sin\alpha e_\theta)\\
\partial_\theta Y=&r\phi e_\theta+(1-\phi)(e_\theta +(r-1)(\lambda_\theta
\cos\alpha e_r+\lambda_\theta \sin\alpha e_\theta\\
&-\lambda\alpha_\theta \sin\alpha e_r+\lambda\alpha_\theta \cos\alpha
e_\theta+\lambda\cos\alpha e_\theta-\lambda\sin\alpha e_r))\\
\end{align*}
Choosing $r_0$ such that $2r_0\max(\lambda,\lambda_\theta,\lambda\alpha_\theta)$
is small and using $\phi'=0$ on $[0,1-r_0]$,
\begin{align*}
\partial_rY=&(\phi+r\phi'-\phi'(1+(r-1)\lambda\cos\alpha)+(1-\phi)\lambda\cos\alpha)e_r\\
&+(-\phi'(r-1)\lambda\sin\alpha+(1-\phi)\lambda\sin\alpha)e_\theta\\
\partial_\theta Y=&(r\phi+1-\phi) e_\theta+\eps(r,\theta)
\end{align*}
where $\eps(r,\theta)$ can be assumed small by reducing $r_0$ and is vanishing
if $r<1-r_0$. So the Euclidean Jacobian is
$$
J=\frac1r(\phi+r\phi'-\phi'(1+(r-1)\lambda\cos\alpha)+(1-\phi)\lambda\cos\alpha)(r\phi+1-\phi)+\frac1r\eps
$$
since $\phi$ will be chosen such that $r_0\phi'$ is bounded.

If $r\le 1-r_0$, we get $J=1$. So we focus on the sign of 
\begin{multline*}
(\phi+r\phi'-\phi'(1+(r-1)\lambda\cos\alpha)+(1-\phi)\lambda\cos\alpha)=\\
((\phi+(r-1)\phi')+\lambda\cos\alpha(1-(\phi+(r-1)\phi'))
\end{multline*}
We will choose $\phi$ such that this quantity is positive. We can assume that
$\lambda\le \lambda_0$, $\lambda\cos\alpha\ge \eta$ where $\lambda_0$ and
$\eta\in(0,1)$ are some constant. Since
$\phi'\le 0$, we have $\phi+(r-1)\phi'\ge \phi\ge 0$. So if
$\phi+(r-1)\phi'\le 1$ and $\lambda\cos\alpha\ge 1$, one has
\begin{align*}
\phi+(r-1)\phi'+\lambda\cos\alpha(1-\phi-(r-1)\phi')&\ge
\lambda\cos\alpha+(\phi+(r-1)\phi')(1-\lambda\cos\alpha)\\
&\ge \lambda\cos\alpha+(1-\lambda\cos\alpha)\ge 1
\end{align*}
If $\phi+(r-1)\phi'\le 1$ and $\lambda\cos\alpha\le 1$,
\begin{align*}
\phi+(r-1)\phi'+\lambda\cos\alpha(1-\phi-(r-1)\phi')&\ge
\lambda\cos\alpha+(\phi+(r-1)\phi')(1-\lambda\cos\alpha)\\
&\ge \lambda\cos\alpha\ge \eta
\end{align*}
If $\phi+(r-1)\phi'\ge 1$ we have
\begin{align*}
\phi+(r-1)\phi'+\lambda\cos\alpha(1-\phi-(r-1)\phi')&\ge
\phi+(r-1)\phi'+\lambda_0(1-(\phi+(r-1)\phi'))\\
&\ge (r\phi+(1-\phi)(1+\lambda_0(r-1)))'
\end{align*}
So we will choose $\phi$ such that $(r\phi+(1-\phi)(1+\lambda_0(r-1)))'>0$. Actually
we choose $\psi:[0,1]\to[0,1]$ non increasing with $\psi(1)=0$, $\psi'(1)=0$ and $\psi(r)=1$
on $[0,1/2]$ and such that $(r\psi+(1-\psi)(1+\lambda_0(r-1)))'>0$ on $[1/2,1]$.
Then $\phi$ is defined by
$$
\phi(r)=
\begin{cases}
1&\text{if }r\le 1-r_0\\
\psi(1+\frac{r-1}{r_0})&\text{if }r\ge 1-r_0\\
\end{cases}
$$
Thus the expected estimates about $\phi$ come from the ones on $\psi$ and are
independent of $r_0$. Thus $Y$ is a $C^1$ local diffeomorphism and then an open
map. Since
$Y_{|\partial \D}=\id$, $Y(\D)\subset \D$ and $Y$ is a global diffeomorphism.
Moreover $\partial_rY=X$.

So in order to conclude the proof of Proposition~\ref{prop:confboundary}, we
will apply the above construction to the vector fields $X(\sigma)$ given for
$\zeta\in\partial \D$ by 
$$
X(\sigma)(\zeta)=D(F(\sigma))^{-1}(\zeta)\big(r_{-\pi/2}(\partial_\theta F(\sigma)(\zeta))\big)
$$
where $r_{-\pi/2}$ is the rotation by angle $-\pi/2$ in $T_{F(\sigma)(\zeta)}\sigma$.

Let us remark that when $\pi(\sigma)=(p,v,t)$ with $t$ close to $1$, we have
$$
F(\sigma)(x,y)=\sqrt{1-t^2}(xv+yp\wedge v)+(t+u_{p,v,t}(x,y))p
$$
So $\partial_\theta F(\sigma)(e^{i\theta})=\sqrt{1-t^2}(-\sin\theta v+\cos\theta
p\wedge v)+O(1-t)$ and $\partial_rF(\sigma)(e^{i\theta})=\sqrt{1-t^2}(\cos\theta
v+\sin\theta p\wedge v)+O(1-t)$. So $r_{-\pi/2}(\partial_\theta
F(\sigma)(e^{i\theta}))=\sqrt{1-t^2}(\cos\theta v+\sin\theta p\wedge
v)+O(1-t)=\partial_rF(\sigma)(e^{i\theta})+O(1-t)$. This give us
$X(\sigma)(e^{i\theta})=e_r+O(\sqrt{1-t})$. The same is true for $t$ near $-1$.

As a consequence, all the estimates that appear in the construction can be
chosen uniformly in $\sigma$. So $r_0$ can be chosen independently of $\sigma$
and $Y$ depends continuously on $\sigma$. The last remark is that, as
$t\to\pm1$, $Y(\sigma)\to \id$.
\end{proof}

In the sequel, we denote $\widetilde F(\sigma)=F(\sigma)\circ Y(\sigma)$ which
is conformal on the boundary. 


\subsection{Extending $\widetilde F$ and the boundary behaviour}


We denote by $\nu(\sigma)=\frac{\partial_x \widetilde F(\sigma)\wedge \partial_y
\widetilde F(\sigma)}{\|\partial_x \widetilde F(\sigma)\wedge \partial_y
\widetilde F(\sigma)\|}$ the Euclidean unit normal to $\sigma$. We also define
$h(\sigma)=\frac{\partial_x \widetilde F(\sigma)}{\|\partial_x \widetilde
F(\sigma)\|}$. Finally we define $H:\Sigma\times \D\to \B\times U\S^2$ by
$$
H(\sigma,\zeta)=(\widetilde F(\sigma)(\zeta),\nu(\sigma)(\zeta),h(\sigma)(\zeta)).
$$

Looking at the parametrization $\widetilde F(\sigma)$ when $\sigma\in
\Sigma_{\pm}$, we have $H(\sigma,\zeta)\to (\pm p,p,v)$ when $\pi(\sigma)\to
(p,v,\pm 1)$. This allows us to compactify $\Sigma\times\D$ and extend $H$ to
this compactification.

Since $(\pi,\id)$ is a bijection from $\Sigma_+\times \D\to
U\S^2\times[1-\bar\eps,1)\times \D$, one can extend $\Sigma_+\times \D$ as
$U\S^2\times[1-\bar\eps,1]\times \D$ then take the quotient by the relation
$(p,v,1,\zeta)\sim(p,v,1,\zeta')$. Its like compactifying $[1-\bar\eps,1)\times \D$ as the
upper part of a $3$-ball. Then $H$ extends by continuity by $H(p,v,1,\zeta)=(p,p,v)$. The
same can be done for $\Sigma_-$.

So we get a compact manifold $K$ where $H$ extends to $G:K\to \B\times U\S^2$.
We need to describe $G$ on $\partial K$:
\begin{itemize}
\item if $k\in \partial K$ is a point added to $\Sigma\times \D$ along the
compactification corresponding to $(p,v,1 )$, we have $G(k)=(p,p,v)$;
\item if $k\in \partial K$ is a point added to $\Sigma\times \D$ along the
compactification corresponding to $(p,v,-1)$, we have $G(k)=(-p,p,v)$;
\item if $k$ is not an added point and $k=(\sigma,\zeta)$ with $|\zeta|=1$, we
have $G(k)=(\delta(\sigma)(\zeta),\nu(\sigma)(\zeta),h(\sigma)(\zeta))$. If
$\pi(\sigma)=(p,v,t)$, $\partial(\sigma)(\zeta)=\wPhi(p,v,t)(\zeta)$ which is
close to $\Phi(p,v,t)(\zeta)$ (and even equal on $\Sigma_\pm\times \partial
\D$). $\nu(\sigma)(\zeta)$ is a unit normal vector to $\wPhi(p,v,t)$. Since
$\widetilde F(\sigma)$ is conformal along $\partial \D$, we have:
\begin{align*}
h(\sigma)(e^{i\theta})&=\frac{\cos\theta \partial_r\widetilde
F(\sigma)-\sin\theta\partial_\theta\widetilde
F(\sigma)}{\|\partial_\theta\widetilde F(\sigma)\|} (e^{i\theta})\\
&=\frac{\cos\theta \partial_r\widetilde
F(\sigma)-\sin\theta\partial_\theta\wPhi(\pi(\sigma))}{\|\partial_\theta\wPhi(\pi(\sigma))\|}
(e^{i\theta})\\
&=\cos\theta\left(\frac{\partial_\theta\wPhi(\pi(\sigma))}
{\|\partial_\theta\wPhi(\pi(\sigma))\|}\wedge \nu\right)-\sin\theta
\frac{\partial_\theta\wPhi(\pi(\sigma))} {\|\partial_\theta\wPhi(\pi(\sigma))\|}\\
&=\cos\theta\tilde e_\theta\wedge \nu-\sin\theta\tilde e_\theta
\end{align*}
where $\tilde e_\theta=\frac{\partial_\theta\wPhi(\pi(\sigma))} {\|\partial_\theta\wPhi(\pi(\sigma))\|}$.
\end{itemize} 

Actually we have
\begin{prop}\label{prop:homotopy}
On $\partial K$, choosing $\wPhi$ close enough to $\Phi$, $G$ is homotopic to
$\widetilde G:\partial K\to \S^2\times U\S^2;(\sigma,\zeta)\mapsto
\big(\Phi(\pi(\sigma))(\zeta),\pi_1(\sigma)\big)$ where $\pi_1(\sigma)=(p,v)$ if
$\pi(\sigma)=(p,v,t)$.
\end{prop}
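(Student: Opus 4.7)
The plan is to build the homotopy piecewise on $\partial K$, treating the two ``polar caps'' (the points added under the compactification at $t=\pm1$) and the cylindrical piece $\Sigma\times\S^1$ separately.

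First, I would verify that $G$ and $\widetilde G$ already coincide on the polar caps of $\partial K$. At a point $k$ added along the compactification corresponding to $(p,v,1)$, the explicit description of $G$ gives $G(k)=(p,p,v)$, while $\widetilde G(k)$ is the continuous extension of $(\sigma,\zeta)\mapsto(\Phi(\pi(\sigma))(\zeta),\pi_1(\sigma))$. Since $\Phi(p,v,t)(\zeta)=tp+\sqrt{1-t^2}(\cos\theta\,v+\sin\theta\,p\wedge v)\to p$ as $t\to1$ and $\pi_1(\sigma)=(p,v)$, we get $\widetilde G(k)=(p,p,v)=G(k)$. The identical computation at $t=-1$ gives $\widetilde G(k)=(-p,p,v)=G(k)$. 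Hence the homotopy can be chosen to be stationary on the polar caps.

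On the cylindrical piece $\Sigma\times\S^1$, I would build the homotopy in two successive stages. In the first stage, I would deform the $\S^2$-factor from $\wPhi(\pi(\sigma))(\zeta)$ to $\Phi(\pi(\sigma))(\zeta)$ along the unique shortest geodesic on $\S^2$; this is well defined because $\wPhi$ is chosen close enough to $\Phi$ that these points are never antipodal, and it is stationary near the polar caps since $\wPhi=\Phi$ on $U\S^2\times((-1,-1+\bar\eps]\cup[1-\bar\eps,1))$. The $U\S^2$-factor is left unchanged in this stage. In the second stage, the $\S^2$-factor is held fixed at $\Phi(\pi(\sigma))(\zeta)$, and I would deform the $U\S^2$-factor from $(\nu(\sigma)(\zeta),h(\sigma)(\zeta))$ to $(p,v)$. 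The natural candidate is to deform the parametrization $\widetilde F(\sigma)$ of $\sigma$ to the flat-disk parametrization $\widetilde F^{\mathrm{flat}}(\sigma)(x,y)=\sqrt{1-t^2}(xv+yp\wedge v)+tp$ through the straight-line family $\widetilde F_s=(1-s)\widetilde F(\sigma)+s\widetilde F^{\mathrm{flat}}(\sigma)$ in $\R^3$, and take $(\nu_s,h_s)$ to be the induced boundary frame; on $\Sigma_\pm$ this works directly, since by Proposition~\ref{prop:closepole} the two parametrizations differ only by the small normal perturbation $u_{p,v,t}$, and at $s=1$ one checks that $h^{\mathrm{flat}}=v$ from the formula $h=\cos\theta\,\tilde e_\theta\wedge\nu-\sin\theta\,\tilde e_\theta$.

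The hard part is making this second stage well defined on $\Sigma_0\times\S^1$, where the minimal disks can be far from flat. Concretely, one has to verify that $\partial_x\widetilde F_s\wedge\partial_y\widetilde F_s$ never vanishes along $\partial\D$, or equivalently that the rotation in $SO(3)$ carrying the frame $(\nu,h,\nu\wedge h)$ to $(p,v,p\wedge v)$ admits a continuous interpolation to the identity over all of $\Sigma_0\times\S^1$. My approach would be to exploit that the interpolation is already prescribed on $\Sigma_\pm\times\S^1$, where the frames are close to $(p,v)$, and then extend it over $\Sigma_0$ by a topological argument, using the connectedness of $\Sigma$ and the freedom to take $\wPhi$ still closer to $\Phi$ to avoid rotations of angle $\pi$. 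Should a straight-line interpolation of $\widetilde F_s$ still fail somewhere on $\Sigma_0$, I would replace it by an abstract rotation-based interpolation $(\nu_s,h_s)=R_s\cdot(\nu,h)$ with $R_s\in SO(3)$ chosen continuously along a path avoiding the antipodal locus of $SO(3)$, and assemble these local choices by a partition-of-unity argument over $\Sigma_0\times\S^1$.
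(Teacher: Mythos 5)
Your decomposition of $\partial K$ into polar caps and the cylindrical piece, the verification that $G$ and $\widetilde G$ already agree on the caps, and the plan to homotope factor-by-factor (geodesic interpolation on $\S^2$, then a frame interpolation on $U\S^2\simeq SO(3)$) all match the paper's structure. But the second stage, which you yourself flag as the hard part, is where your argument genuinely breaks down.

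The issue is what controls the rotation from $(\nu,h,\nu\wedge h)$ to $(p,v,p\wedge v)$ over $\Sigma_0\times\S^1$. You appeal to ``the freedom to take $\wPhi$ still closer to $\Phi$ to avoid rotations of angle $\pi$.'' This does not work on its own: on $\Sigma_0$, the normal $\nu(\sigma)(\zeta)$ to a minimal disk along a mid-latitude boundary curve is determined by the solution of the Plateau problem and can be far from $p$ no matter how close $\wPhi$ is to $\Phi$. Closeness of $\wPhi$ to $\Phi$ only shrinks certain error terms; it does not constrain the angle between $\nu$ and $p$. The missing geometric input is the boundary-regularity fact built into $\boM$: the disks are never tangent to $\partial M$, so $\tilde e_\theta\wedge\nu$ points strictly outward at $\partial\B$, i.e.\ $\langle\tilde e_\theta\wedge\nu,\wPhi(\pi(\sigma))(\zeta)\rangle>0$. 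This constraint, combined with $h=\cos\theta\,\tilde e_\theta\wedge\nu-\sin\theta\,\tilde e_\theta$, pins down the trace of $(\nu,h,\nu\wedge h)^{-1}(p,v,p\wedge v)$ to be $>-1$ (with the closeness of $\wPhi$ to $\Phi$ used only to make the approximate angle identities tight enough). That trace bound is precisely ``not in the cut-locus of the identity in $\R P^3\simeq SO(3)$,'' and is what makes the geodesic homotopy in the $U\S^2$-factor globally well defined. Without it your proof has no reason the frame interpolation exists on $\Sigma_0$.

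Your proposed fallback, choosing rotation paths $R_s$ locally and assembling by partition of unity, does not repair this. Paths in $SO(3)$ cannot be averaged by a partition of unity, since $SO(3)$ is not a vector space; and any attempt to linearize via the exponential map runs into exactly the same obstruction you are trying to avoid, the rotations of angle $\pi$. Likewise the straight-line interpolation $\widetilde F_s=(1-s)\widetilde F+s\widetilde F^{\mathrm{flat}}$ can have degenerate differential along $\partial\D$ on $\Sigma_0$, and there is no a priori bound ruling that out. What you need, and what the paper supplies, is the pointwise trace estimate coming from the outward-pointing property of $\tilde e_\theta\wedge\nu$.
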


\begin{proof}
In order to construct this homotopy we can construct it for each factor $\S^2$
and $U\S^2$. We focus on the second one.

To do this, we endow $U\S^2$ with a Riemannian metric such that the following identifications
$\R P^3\simeq SO_3\simeq U\S^2$ are isometries. Let us recall that these
identifications are constructed in the following way. If $(a,b,c,d)\in \S^3$, the
unit norm quaternion $q=a+bi+cj+dk$ acts by conjugation on the unit sphere of
purely imaginary quaternions by the following matrix of $SO_3$:
$$
\begin{pmatrix}
a^2+b^2-c^2-d^2&2(-ad+bc)&2(ac+bd)\\
2(ad+bc)&a^2-b^2+c^2-d^2&2(-ab+cd)\\
2(-ac+bd)&2(ab+cd)&(a^2-b^2-c^2+d^2)\\
\end{pmatrix}
$$
We notice that only $(a,b,c,d)$ and $(-a,-b,-c,-d)$ are identified with the
above matrix; so $\R P^3\simeq SO_3$. Besides, if $(p,v)\in U\S^2$, we can
consider the matrix $(p,v,p\wedge v)\in SO_3$. This gives the second
identification $U\S^2\simeq SO_3$. Finally we use them to put on $U\S^2$ the
Riemannian metric of $\R P^3$ inherited from the one on $\S^3$. 

As a consequence, the cut-locus of $((1,0,0),(0,1,0))$ (which is
$[a:b:c:d]=[1:0:0:0]\in \R P^3$) is given by $\{a=0\}$. So the cut-locus of
$I_3$ is the set of matrices of trace $-1$: the rotation of angle $\pi$.

In order to construct the homotopy, we are going to prove that for any
$(\sigma,\zeta)$ the matrix $\big(\nu(\sigma)(\zeta),h(\sigma)(\zeta),
\nu(\sigma)(\zeta)\wedge h(\sigma)(\zeta)\big)$ is not in the cut-locus of
$(p,v,p\wedge v)$ where $\pi_1(\sigma)=(p,v)$. Because of the group structure of
$SO_3$ and the invariance of the below arguments by left multiplication, we assume
that $\pi_1(\sigma)=(p_0,v_0)=((1,0,0),(0,1,0))$. Moreover we recall that $\Phi$
is close to $\wPhi$ and even equal when $t$ is close to $\pm1$.

At $\wPhi(p_0,v_0,t)(e^{i\theta})$, the unit vector $\tilde e_\theta$ is close
to $e_\theta=(0,-\sin\theta,\cos\theta)$. So we can write $\tilde
e_\theta=(\sin\alpha,-\cos\alpha\sin\theta',\cos\alpha\cos\theta')$ with
$\alpha$ close to $0$ and $\theta'$ close to $\theta$. We write
$\nu=(\cos\phi,\sin\phi\cos\beta,\sin\phi\sin\beta)$ with $\beta$ close to
$\theta$ and even equal if $t$ close to $\pm 1$. Thus
$$
\tilde e_\theta\wedge\nu=\begin{pmatrix}
\sin\alpha\\ -\cos\alpha\sin\theta'\\\cos\alpha\cos\theta'
\end{pmatrix}
\wedge \begin{pmatrix}
\cos\phi\\\sin\phi\cos\beta\\ \sin\phi\sin\beta
\end{pmatrix}=
\begin{pmatrix}
-\cos\alpha\sin\phi \cos(\theta'-\beta)\\
-\sin\alpha\sin\phi\sin\beta+\cos\alpha\cos\phi\cos\theta'\\
\sin\alpha \sin\phi \cos\beta+\cos\alpha\cos\phi\sin\theta'
\end{pmatrix}
$$
Moreover $\nu\wedge h=\nu\wedge(\cos\theta \tilde e_\theta\wedge \nu-\sin\theta
\tilde e_\theta)=\cos\theta \tilde e_\theta+\sin\theta \tilde
e_\theta\wedge\nu$. Thus the trace of $(\nu,h,\nu\wedge h)$ takes the value
\begin{align*}
\tr(\nu,h,\nu\wedge
h)&=\cos\phi+\cos\theta(-\sin\alpha\sin\phi\sin\beta+\cos\alpha\cos\phi\cos\theta')\\
&\qquad+\sin\theta\cos\alpha\sin\theta'+\cos\theta\cos\alpha\cos\theta'\\
&\qquad+\sin\theta(\sin\alpha \sin\phi \cos\beta+\cos\alpha\cos\phi\sin\theta')\\
&=\cos\phi+\sin\alpha\sin\phi\sin(\theta-\beta)+\cos\alpha\cos\phi\cos(\theta-\theta')\\
&\qquad +\cos\alpha\cos(\theta-\theta')
\end{align*}
As $\alpha$ is close to $0$ and $\theta'$ close to $\theta$, the trace is close
to $1+2\cos\phi$.

For $|t|\ge 1-\bar\eps$, we know $\theta=\theta'=\beta$ and $\alpha=0$. Moreover
$\phi$ is close to $0$ (its value when $t=\pm1$). So the value of the trace is
close to $3$ for $|t|\ge \cos\xi$ (for some positive $\xi$ close to $0$).

For $t$ not close to $\pm1$, we know that $\tilde e_\theta\wedge \nu$ points to
the outside of $\B$ at
$\wPhi(p_0,v_0,t)(e^{i\theta})=(\cos\gamma,\sin\gamma\cos\theta'',\sin\gamma\sin\theta'')$
where $\cos\gamma$ is close to $t$ and $\theta''$ is close to $\theta$ since
$\nu$ is never normal to $\S^2=\partial\B$. So
\begin{align*}
0<\la \tilde e_\theta&\wedge \nu,\wPhi(p_0,v_0,t)(e^{i\theta})\ra\\
&=-\cos\gamma\cos\alpha\sin\phi
\cos(\theta'-\beta) \\
&\qquad +\sin\gamma\cos\theta''(-\sin\alpha\sin\phi\sin\beta+\cos\alpha\cos\phi\cos\theta')\\
&\qquad +\sin\gamma\sin\theta''(\sin\alpha \sin\phi \cos\beta+\cos\alpha\cos\phi\sin\theta')\\
&=-\cos\gamma\cos\alpha\sin\phi \cos(\theta'-\beta)+\sin\gamma\sin\alpha\sin\phi\sin(\theta''-\beta)\\
&\qquad +\sin\gamma\cos\alpha\cos\phi\cos(\theta''-\theta')
\end{align*}
As $\theta\simeq\theta'\simeq\theta''\simeq \beta$ and $\alpha\simeq 0$, we get
$0<\sin(\gamma-\phi)+\eps$. So $\gamma-\pi-\eta<\phi<\gamma+\pi+\eta$ for some
small $\eta$. We notice that all the above approximations depends on how far
$\wPhi$ is from $\Phi$. Since we can choose $\wPhi$ as close of $\Phi$ as we
want, we can make all these approximations very precise. So where we have to
pertub $\Phi$ into $\wPhi$, $\xi\le \gamma\le \pi-\xi$. Since we can assume
$\eta<\xi$, we get 
$$
-\pi<-\pi+\xi-\eta<\phi<\pi+\eta-\xi<\pi
$$
So $\tr(\nu,h,\nu\wedge h)\simeq 1+2\cos\phi>-1$. Thus choosing $\wPhi$ close
enough to $\Phi$, we can be sure that, for any $(\sigma,\zeta)$,
$(\nu(\sigma)(\zeta),h(\sigma)(\zeta))$ is not in the cut-locus of
$\pi_1(\sigma)=(p,v)$. Thus moving the points along the geodesic from
$(\nu(\sigma)(\zeta),h(\sigma)(\zeta))$ to $\pi_1(\sigma)$, we get an homotopy
from $(\sigma,\zeta)\mapsto (\nu(\sigma)(\zeta),h(\sigma)(\zeta))$ to
$(\sigma,\zeta)\mapsto \pi_1(\sigma)$. This finishes the proof for the second
factor.

For the first one, we only remark that $(\sigma,\zeta)\mapsto
\wPhi(\pi(\sigma)(\zeta))$ and $(\sigma,\zeta)\mapsto
\Phi(\pi(\sigma)(\zeta))$ are close since $\wPhi$ is a perturbation of $\Phi$
(and we have equality for points added in the compactification). So the two
 factors are homotopic. This finishes the proof of Proposition~\ref{prop:homotopy}.
\end{proof}


\subsection{The final argument}


We are going to prove that $G$ is surjective on the interior of $\B\times
U\S^2$. If it is true, this will tell us that for any $(q,\nu)\in \B\times \S^2$
there is a disk $\sigma$ passing through $q$ with unit normal $\nu$: this is
exactly the statement of Theorem~\ref{th:prescrip}.

Let us assume that this is not the case. Let $A=(a,b)\in \B\times U\S^2$ be not in
the image of $G$. Let us notice that $\B\setminus\{a\}$ can be deformation retracted to
$\S^2$ and $U\S^2\setminus\{b\}$ can be deformation retracted to the equatorial $\R P^2$ in
$\R P^3\simeq U\S^2$ with pole at $b$. So $\B\times U\S^2\setminus \{A\}$ can be deformation
retracted to $\Delta=(\S^2\times U\S^2)\cup_{\S^2\times \R P^2}(\B\times \R P^2)$
where both terms are glued together along the common $\S^2\times \R P^2$.

Let us consider homology with $\Z/2\Z$ coefficients since it is not clear if $K$
is orientable. Composing $G$ with the deformation retract defines a map $G':K\to
\Delta$ which coincides with $G$ on $\partial K$. As a consequence, $[G(\partial
K)]=[G'(\partial K)]=[\partial G'(K)]=0\in H_5(\Delta,\Z/2\Z)$.

A part of the Mayer-Vietoris sequence associated to $\Delta=(\S^2\times
U\S^2)\cup_{\S^2\times \R P^2}(\B\times \R P^2)$ is 
$$
H_5(\S^2\times \R P^2)\to H_5(\S^2\times U\S^2)\oplus H_5(\B\times \R P^2)\to H_5(\Delta)
$$
Since $H_5(\S^2\times \R P^2)=0$ and $H_5(\B\times\R P^2)=0$, the
inclusion $i:\S^2\times U\S^2\hookrightarrow \Delta$ gives an injective inclusion
$H_5(\S^2\times U\S^2,\Z/2\Z)\to H_5(Z,\Z/2\Z)$. $G$ is homotopic to $\widetilde
G :\partial K\to \S^2\times
U\S^2\subset Z; (\sigma,\zeta)\mapsto (\Phi(\pi(\sigma))(\zeta),\pi_1(\sigma))$
which is a degree $1$ map: indeed when $t$ is close to $1$ there is exactly one
antecedent. So $[G(\partial K )]= [\widetilde G(\partial K)]\neq 0\in
H_5(Z,\Z/2\Z)$. Thus we have a contradiction and $G$ is surjective.


\section{A minimal disk containing three points}
\label{sec:3point}


In this section, we do similar arguments to the preceding section in order to
prove that choosing three points in a Riemannian ball there is a minimal disk
containing these three points.

\begin{thm}\label{th:3point}
Let $M=(\B,g)$ be as in Theorem~\ref{th:prescrip}. Let $q_1,q_2,q_3\in M$ be
three points, then there is
$\sigma\in\boM$ such that $q_i\in \sigma$ for 1$\le i\le 3$. 
\end{thm}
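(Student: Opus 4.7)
For Theorem~\ref{th:3point}, my plan is to imitate the proof of Theorem~\ref{th:prescrip}, keeping the $4$-dimensional family $\Sigma\subset\boM$ and the parametrization $F:\Sigma\to C^{2,\alpha}(\D,\B)$ from Section~\ref{sec:prescrib}. Consider the evaluation map
\begin{equation*}
G:\Sigma\times\D^3\longrightarrow\B^3,\qquad (\sigma,\zeta_1,\zeta_2,\zeta_3)\longmapsto\big(F(\sigma)(\zeta_1),F(\sigma)(\zeta_2),F(\sigma)(\zeta_3)\big).
\end{equation*}
Surjectivity of $G$ onto $\Int(\B)^3$ is precisely what we want. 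A dimension count gives $\dim(\Sigma\times\D^3)=10$ and $\dim(\B^3)=9$; the extra dimension is accounted for by the natural $\S^1$-action of reparametrization. The rotation $v\mapsto R_\theta v$ in $U\S^2$ precomposes $F(\sigma)$ with the rotation $\zeta\mapsto e^{i\theta}\zeta$, so the diagonal action $e^{i\theta}\cdot(\sigma,\zeta_1,\zeta_2,\zeta_3)=(R_\theta\sigma,\,e^{-i\theta}\zeta_1,\,e^{-i\theta}\zeta_2,\,e^{-i\theta}\zeta_3)$ leaves $G$ invariant, and is free on $\Sigma\times\D^3$ (it is already free on $\Sigma$ alone, since no unit $v\in p^\perp$ is fixed by a nontrivial $R_\theta$). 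Hence $G$ descends to $\bar G:(\Sigma\times\D^3)/\S^1\to\B^3$, a map between $9$-dimensional manifolds for which a mod-$2$ degree argument is available.

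The rest would proceed in parallel with Section~\ref{sec:prescrib}. I would compactify $(\Sigma\times\D^3)/\S^1$ to a compact $9$-manifold $\bar K$ with $8$-dimensional boundary by adjoining caps at $t=\pm 1$ (where the disk shrinks to $\pm p$ and all three $F(\sigma)(\zeta_i)$ coincide with $\pm p$) and by closing up as the $\zeta_i$'s reach $\partial\D$. The extended $\bar G$ sends $\partial\bar K$ into $\partial\B^3$, since on each boundary piece at least one of the three coordinates lies in $\partial\B=\S^2$. Assuming, for contradiction, that $A=(q_1,q_2,q_3)\in\Int(\B)^3$ is not in $\bar G(\bar K)$, the map $\bar G$ factors through $\B^3\setminus\{A\}$; and since $\B^3$ is topologically a $9$-ball, $\B^3\setminus\{A\}$ deformation-retracts onto $\partial\B^3\simeq S^8$, giving
\begin{equation*}
[\bar G|_{\partial\bar K}]=[\partial\bar G(\bar K)]=0\in H_8(\partial\B^3,\Z/2\Z)\cong\Z/2\Z.
\end{equation*}
On the other hand, an $\S^1$-equivariant analogue of Proposition~\ref{prop:homotopy} would let me homotope $\bar G|_{\partial\bar K}$ to a model map $\widetilde G$ built from the unperturbed parallels $\Phi$, and the mod-$2$ degree of $\widetilde G$ is easily computed near the $t=1$ cap, where each disk is essentially a small Euclidean cap around its pole $p$ and a generic target point of $\partial\B^3$ close to the diagonal $\{(p,p,p):p\in\S^2\}$ has an odd number of $\widetilde G$-preimages. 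This contradicts the vanishing above, so $A\in\bar G(\bar K)$ and the desired minimal disk through $q_1,q_2,q_3$ exists.

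The main obstacle is arranging the $\S^1$-equivariance globally. The perturbed boundary map $\wPhi$ in Proposition~\ref{prop:param} is produced by Smale's transversality theorem, which does not automatically respect $\S^1$-symmetry; one needs an equivariant version, available here because the action is free on $U\S^2\times(-1,1)$ and the transversality conditions are $\S^1$-invariant, yielding an $\S^1$-invariant $\Sigma$ and an equivariant $F$. As an equivalent workaround, one can avoid quotienting by instead enlarging the target to $\B^3\times\S^1$ by tracking an explicit parametrization angle, which turns $G$ into a map between two $10$-dimensional spaces with the same mod-$2$ degree invariant. A secondary technical point is identifying the class of $\widetilde G$ in $H_8(\partial\B^3,\Z/2\Z)$: since $\partial\B^3\simeq S^8$ does not split as a product as in Section~\ref{sec:prescrib}, one either works directly with $S^8$ via a local preimage count near the cap, or uses the Mayer--Vietoris decomposition of $\partial\B^3$ into its three faces $\partial\B\times\B\times\B$, $\B\times\partial\B\times\B$, $\B\times\B\times\partial\B$ together with the diagonal contributions coming from the caps.
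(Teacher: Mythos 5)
Your plan is genuinely different from the paper's, and while it correctly identifies the need to kill one extra dimension via a symmetry group, the group you chose makes the construction much harder and, as presented, there is a real gap. The $\S^1$-action on $C^{k,\alpha}(\S^1,\S^2)$ by rotation of the argument is only \emph{continuous}, not smooth (differentiating $\theta\mapsto X(e^{i\theta}\,\cdot\,)$ in $C^{k,\alpha}$ loses a derivative). Consequently the quotient $C^{k,\alpha}(\S^1,\S^2)/\S^1$ is not a Banach manifold in the usual sense, and there is no off-the-shelf ``$\S^1$-equivariant Sard--Smale'' in this setting; your sentence asserting that an $\S^1$-equivariant perturbation of $\Phi$ is ``available'' is precisely the unjustified step. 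There is also a small circularity: you argue that the $\S^1$-action on $\Sigma$ is free by pushing to $U\S^2$ via $\pi=\wPhi^{-1}\circ\delta$, but that requires $\wPhi$ (hence $\Sigma$) to already be $\S^1$-equivariant, which is what needs to be built.

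The paper sidesteps all of this by working with the \emph{finite} group $G=(\Z/2\Z)^2$ generated by $R(X)(z)=X(-z)$ and $S(X)(z)=X(\bar z)$ (with the corresponding $r(p,v,t)=(p,-v,t)$, $s(p,v,t)=(-p,v,-t)$ on $U\S^2\times(-1,1)$). Because $G$ is finite and acts freely by Banach-manifold diffeomorphisms, the quotients $\widetilde\boM$, $\widetilde\Gamma$ are honest Banach manifolds and $\tilde\delta$ is Fredholm of index $0$, so Smale transversality can be applied directly in the quotient, yielding an equivariant $\Sigma$ and $F$ by lifting. To cut the extra dimension the paper does not quotient by a continuous group at all: one of the three evaluation parameters is restricted to be real, $x_1\in[-1,1]\subset\D$, giving a domain $\Sigma\times[-1,1]\times\D^2$ of dimension $9$ before the (dimension-preserving) $G$-quotient. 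Finally, the paper does not compactify and run a homology argument as in Section~\ref{sec:prescrib}; since the target $\Int\B^3$ is a ball, it suffices to observe that $\widetilde H$ is proper and to compute the mod-$2$ degree at a point near the $t\to 1$ cap by an explicit $9\times 9$ Jacobian, which is invertible, giving degree $1$. Your $S^8$ argument is not wrong in principle, but it does extra work that the ball structure of the target makes unnecessary. If you want to salvage the $\S^1$ idea, you would essentially have to reprove transversality with the rotation symmetry built in by hand, which is considerably more delicate than the finite-group route the paper takes.
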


In order to do the proof we need an equivariant version of the construction of
the preceding section. Let $R$ and $S$ be defined as maps
$C^{2,\alpha}(\S^1,\S^2)\to C^{2,\alpha}(\S^1,\S^2)$ or $C^{2,\alpha}(\D,\B)\to
C^{2,\alpha}(\D,\B)$ by $R(X)(z)=X(-z)$ and $S(X)(z)=X(\bar z)$. We notice that
these maps induce maps $R$ and $S$ on $\boM$ such that the boundary map $\delta$
is equivariant. We also notice that $R$ and $S$ generate a free actions of $G
=(\Z/2\Z)^2$ on $\boM$ and $C^{2,\alpha}(\S^1,\S^2)$. 

On $U\S^2\times (-1,1)$ we also define $r(p,v,t)=(p,-v,t)$ and
$s(p,v,t)=(-p,v,-t)$. It also generates a free action of $G=(\Z/2\Z)^2$ on
$U\S^2\times (-1,1)$ such that $\Phi\circ r = R\circ \Phi$ and $\Phi\circ
s=S\circ \Phi$ where $\Phi$ is defined by~\eqref{eq:Phi}. 

Let us denote $\widetilde \boM=\boM/G$, $\widetilde U=(U\S^2\times (-1,1))/G$,
$\widetilde \Gamma=C^{2,\alpha}(\S^1,\S^2)/G$; we denote by $\Pi:\boM\to
\widetilde\boM$ the projection map. Let $\Psi$ and $\tilde\delta$ be
the induced map from $\Phi$ and $\delta$. We then have a commutative diagram
\begin{center}
\begin{tikzcd}
&\widetilde\boM\arrow[d,"\tilde\delta"]\\
\widetilde U\arrow[r,hook,"\Psi"']&\widetilde\Gamma
\end{tikzcd}
\end{center}

Since the action of $G$ on $\boM$ and $C^{2,\alpha}(\S^1,\S^2)$ are free and $G$
is finite, $\widetilde\boM$ and $\widetilde\Gamma$ are Banach manifolds and
$\tilde\delta$ is Fredholm of index $0$. Moreover $\widetilde U$ is a smooth
manifold and $\Psi$ is an embedding. So now the idea is to do an equivariant
version of the work in Section~\ref{sec:prescrib}.


\subsection{The equivariant construction}


As in the preceding section (Proposition~\ref{prop:closepole}), when $t$ is
close to $\pm1$, there is a unique
$\sigma\in \widetilde M$ such that $\tilde\delta(\sigma)=\Psi(p,v,t)$ and
$D\tilde\delta$ is invertible at $\sigma$. So we can perturb $\Psi$ into
$\widetilde\Psi:\widetilde U\to \widetilde \Gamma$ such that $\tilde\delta$ and
$\widetilde\Psi$ are transverse and $\Psi=\widetilde\Psi$ close to $t=\pm1$.

As above $\widetilde\Sigma=\tilde\delta^{-1}(\widetilde\Psi(\widetilde U))$ is
a proper smooth submanifold of $\widetilde\boM$. Let
$\Sigma=\Pi^{-1}(\widetilde\Sigma)\subset \boM$.

As above, we want to have a parametrization of all the
minimal disks in $\Sigma$.
 
When $t$ is close to $\pm 1$, we have the parametrizations
$$
F(p,v,t):(x,y)\mapsto\sqrt{1-t^2}(xv+yp\wedge v)+(t+u_{p,v,t}(x,y))p
$$
We notice that, by the implicit function theorem, we have
$$
F(r(p,v,t))(z)=F(p,v,t)(-z) \text{ and }F(s(p,v,t))(z)=F(p,v,t)(\bar z)
$$
As above we define $\Sigma_\pm=\left\{D_{p,v,t};(p,v,t)\in
U\S^2\times[\pm(1-\bar\eps),\pm1)\right\}$ and $\Sigma_0=\barre{\Sigma\setminus(\cup
\Sigma_\pm)}$.
So we want to extend this to the whole $\Sigma$ in an equivariant way.
 
\begin{lem}
There is a continuous map $F:\Sigma\to C^{2,\alpha}(\D,\B)$ in the $C^2$
topology which coincides with the above
definition on $\Sigma_\pm$ such that $[F(\sigma)]=\sigma$ and $F$ is
equivariant:
$$
F(R(\sigma))(z)=F(\sigma)(-z) \text{ and }F(S(\sigma))(z)=F(\sigma)(\bar z)
$$
\end{lem}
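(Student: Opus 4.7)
The plan is to rerun the construction of Lemma~\ref{lem:smale} equivariantly with respect to the free $G = (\Z/2\Z)^2$-action generated by $R$ and $S$, extended to $C^{2,\alpha}(\D,\B)$ by $(RX)(z) = X(-z)$ and $(SX)(z) = X(\bar z)$. Note that $r$ preserves $\Sigma_\pm$ while $s$ exchanges them, so equivariance on $\Sigma_\pm$ will be a condition involving both components.

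The first step is to check that the explicit parametrization of $F$ on $\Sigma_\pm$ is already $G$-equivariant. This follows from uniqueness in the implicit function theorem of Proposition~\ref{prop:closepole}: the functions $(x,y) \mapsto u_{p,v,t}(-x,-y)$ and $(x,y) \mapsto u_{p,v,t}(x,-y)$ satisfy the minimal graph equations attached to the parameters $(p,-v,t)$ and $(-p,v,-t)$ respectively, and have the same vanishing boundary values; uniqueness in the chart then forces
\[
u_{p,-v,t}(x,y) = u_{p,v,t}(-x,-y) \quad\text{and}\quad u_{-p,v,-t}(x,y) = u_{p,v,t}(x,-y).
\]
Substituting these into the defining formula for $F$ immediately yields $F(r(p,v,t))(z) = F(p,v,t)(-z)$ and $F(s(p,v,t))(z) = F(p,v,t)(\bar z)$ on $\Sigma_\pm$.

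For the extension to $\Sigma_0$, I would descend to the quotient. Since $G$ acts freely on $\boM$ (no diffeomorphism of $\D$ fixing $\partial \D$ conjugates the identity parametrization to $z \mapsto -z$ or $z \mapsto \bar z$) and is finite, $\widetilde\boM$ inherits White charts from $\boM$, and $\widetilde\Sigma_0 = \Sigma_0/G$ is a smooth manifold with boundary. Choose a triangulation of $\widetilde\Sigma_0$ whose top cells lie in such charts and lift to a $G$-invariant triangulation of $\Sigma_0$. Then I would run the inductive construction of Lemma~\ref{lem:smale} directly on $\widetilde\Sigma_0$: on each $(p{+}1)$-cell $\bar e$ in a chart, combine the inclusion with $H$ to get a continuous $X : \bar e \to C^{2,\alpha}(\D,\B)$ with $[X(\sigma)] = \sigma$, define $Z: \partial \bar e \to \diff_{2,\alpha}(\D,\partial\D)$ from the previously constructed $F$, extend $Z$ to $\bar e$ using contractibility of $\diff_\infty(\D, \partial \D)$, and set $F = X \circ Z$. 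Finally, lift $F$ from $\widetilde\Sigma_0$ back to $\Sigma_0$ by choosing one preimage per $G$-orbit of cells and propagating to the rest of the orbit via the equivariance formulas.

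The hard part is matching this lifted $F$ with the $F$ already defined on $\Sigma_\pm$, and this is precisely where Step~1 enters: the equivariance of the explicit $\Sigma_\pm$-formula means it descends consistently to $\widetilde\Sigma_\pm$ and can be used as boundary data for the induction on $\widetilde\Sigma_0$. When lifting back to $\Sigma_\pm$ via a chosen representative in each orbit and propagating by equivariance, the result agrees with the original explicit formula on that representative by construction, and hence on the whole orbit since both are $G$-equivariant. No additional compatibility is needed across cell boundaries inside $\widetilde\Sigma_0$ since the induction is carried out on the quotient exactly as in Lemma~\ref{lem:smale}.
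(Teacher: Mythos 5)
Your proposal is essentially the paper's argument: the paper also lifts a triangulation from $\widetilde\Sigma_0=\Sigma_0/G$ to a $G$-invariant one on $\Sigma_0$, extends $F$ cell by cell on one lift $e$ per $G$-orbit via the Lemma~\ref{lem:smale} construction, and then propagates by the equivariance formulas to $R(e),S(e),RS(e)$, using the explicit $\Sigma_\pm$-formulas (which the paper likewise notes to be equivariant, by the implicit function theorem) as starting data. The only slip in your phrasing is that one cannot literally ``run the induction on $\widetilde\Sigma_0$'' and lift afterwards, since a map $F:\widetilde\Sigma_0\to C^{2,\alpha}(\D,\B)$ with $[F(\tilde\sigma)]=\tilde\sigma$ is ill-typed ($\tilde\sigma$ is a $G$-orbit, not a single disk); the lifting must be interleaved with the induction exactly as you describe in your final sentences, which is what the paper does.
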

\begin{proof}
Let $\widetilde\Sigma_0$ be $\Pi(\Sigma_0)$. Let $\widetilde Y^0\subset
\cdots\subset \widetilde Y^4$ be a triangulation of
$\widetilde\Sigma_0$ such each $4$-cell lifts to $\Sigma_0$ as $4$
disjoint $4$-cells. We denote by $Y^0\subset \cdots\subset Y^4$ the lift of
this triangulation. So if $e$ is a $p$-cell in $Y^p$, $R(e)$, $S(e)$ and
$R\circ S(e)$ are the other $p$-cells above $\Pi(e)$.
 
As above the proof is by induction. For any $0$-cell $\tilde e$ in $\widetilde
Y^0$, let $\{e,R(e),S(e),R\circ S(e)\}$ be $\Pi^{-1}(\tilde e)$. Let $F(e)\in
C^{2,\alpha}(\D,\B)$ be a parametrization of $e$ and define respectively the
 parametrizations of $R(e),S(e),R\circ S(e)$ by $F(R(e))(z)=F(e)(-z)$,
 $F(S(e))(z)=F(e)(\bar z)$ and $F(R\circ S(e))(z)=F(e)(-\bar z)$. Since $e$, $R(e)$,
 $S(e)$ and $R\circ S(e)$ are disjoint this is well defined and moreover $F$ is
 $G$-equivariant.
 
 By induction let us assume that an equivariant $F:Y^p\to C^{2,\alpha}(\D,\B)$
 is defined. Consider $\tilde e$ a $p+1$ cell in $\widetilde Y^{p+1}$ and
 $\{e,R(e),S(e),R\circ S(e)\}$ be $\Pi^{-1}(\tilde e)$. As in Lemma~\ref{lem:smale}, $F$ is defined
 on $\partial e$ and can be extended to the interior of $e$. Then we define, for
 any $\sigma\in e$, $F(R(\sigma))(z)=F(\sigma)(-z)$, $F(S(\sigma))(z)=F(\sigma)(\bar
 z)$ and $F(R\circ S(\sigma))(z)=F(\sigma)(-\bar z)$. This extends the definition
 of $F$ to $S(e)$, $R(e)$ and $R\circ S(e)$. So $F$ is well defined on
 $\Pi^{-1}(\tilde e)$ in an equivariant way. We then end the construction by induction.
 \end{proof}
 
\subsection{The degree argument}


Let us now define $H:\Sigma\times [-1,1]\times \D\times \D \to \B\times \B \times \B$ by 
$H(\sigma,x_1,z_2,z_3)=(F(\sigma)(x_1),F(\sigma)(z_2),F(\sigma)(z_3))$. Our
goal is to prove that $H$ is surjective. We notice that $x_1$ is real.
 
On $\Sigma\times [-1,1]\times \D^2$, there is an action of $G=(\Z/2\Z)^2$ which is defined by 
$$
\barre R(\sigma,x_1,z_2,z_3)=(R(\sigma),-x_1,-z_2,-z_3) \text{ and } \barre
S(\sigma,x_1,z_2,z_3)=(S(\sigma),x_1,\bar z_2,\bar z_3)
$$
Because of the equivariance of $F$, we have $H\circ \barre R=H$ and $H\circ
\barre S=H$. As a consequence the map $H$ induces a map $\widetilde H$ on the
quotient 
$\widetilde K=\Sigma\times [-1,1]\times \D^2/G$. It is enough to prove that
$\widetilde H$ is surjective. Let $\widetilde L$ be the interior of $\widetilde
K$ \textit{i.e.} $(\sigma,x_1,z_2,z_3)\in \widetilde L$ if
$|x_1|<1$, $|z_2|<1$ and $|z_3|<1$.

\begin{lem}
The map $\widetilde H: \widetilde L\to \inter \B\times \inter \B\times \inter
\B$ has mod 2 degree equal to $1$.
\end{lem}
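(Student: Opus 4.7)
The plan is to compute the mod~$2$ degree of $\widetilde H$ at a regular value where the preimage concentrates in the foliation region $\Sigma_+ \cup \Sigma_-$ and can be counted transparently. Both $\widetilde K = (\Sigma \times [-1,1] \times \D^2)/G$ and the target $(\inter\B)^3$ have dimension~$9$, and $\widetilde H$ is proper over the open subset of triples with pairwise distinct coordinates: if $(\sigma_n, x_1^n, z_2^n, z_3^n)$ escapes to infinity in $\widetilde K$, then either some $|z_i^n| \to 1$ (so $F(\sigma_n)(z_i^n) \to \partial \B$), or $\pi(\sigma_n) = (p_n, v_n, t_n)$ has $|t_n| \to 1$, in which case Proposition~\ref{prop:closepole} forces $F(\sigma_n)$ to collapse to $\pm p_n$ and all three images to coincide. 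Hence the mod~$2$ degree is well-defined and may be computed at any convenient regular value.

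\noindent\textbf{Choice of value and localization.} Fix $\bar p \in \S^2$, a small $\eta > 0$, and set $q_i = (1-\eta^2)\bar p + \eta\, a_i$, where $a_1, a_2, a_3 \in T_{\bar p}\S^2$ are fixed bounded vectors in general position (so they span $T_{\bar p}\S^2$), with $a_1$ along a preferred axis. I claim that for $\eta$ small, every lift to $\Sigma$ of a preimage of $(q_1, q_2, q_3)$ lies in $\Sigma_+ \cup \Sigma_-$, with $\pi(\sigma) = (p, v, t)$ having $(p, t)$ close to $(\pm \bar p, \pm 1)$. This localization is the technical heart of the proof. Arguing by contradiction, we find a subsequence $\eta_n \to 0$ and preimages $\sigma_n \in \Sigma$ with $t_n$ bounded away from $\pm 1$. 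Theorem~\ref{th:properness} and the curvature estimates of Theorem~$0$ in~\cite{Whi3} yield a $C^\infty$ subsequential limit $\sigma_\infty \in \Sigma$, smooth up to the boundary. Since $q_i^{(n)} \in \sigma_n$ and $q_i^{(n)} \to \bar p$, the limit $\bar p$ lies in $\overline{\sigma_\infty}$; by (H1) and the strong maximum principle, $\bar p$ lies on $\partial\sigma_\infty \subset \partial \B$, and by definition of $\boM$ the disk $\sigma_\infty$ meets $\partial\B$ transversally there, so $T_{\bar p}\sigma_\infty \ne T_{\bar p}\S^2$. On the other hand, $(q_i^{(n)} - \bar p)/\|q_i^{(n)} - \bar p\| \to a_i/|a_i| \in T_{\bar p}\S^2$, and the $C^\infty$ graph description of $\sigma_n$ near $\bar p$ forces each of these limit directions to lie in $T_{\bar p}\sigma_\infty$. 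By general position of the $a_i$, their directions span $T_{\bar p}\S^2$, giving $T_{\bar p}\sigma_\infty = T_{\bar p}\S^2$, a contradiction.

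\noindent\textbf{Counting.} With the preimage localized in the foliation region, the leading-order parametrization of Proposition~\ref{prop:closepole},
\[
F(p,v,t)(x,y) = tp + \sqrt{1-t^2}\bigl(xv + y\,p\wedge v\bigr) + O(1-t),
\]
reduces the system $F(\sigma)(z_i) = q_i$ (with $z_i = x_i + iy_i$ and $z_1 = x_1 \in \R$) to a linear one at leading order. The affine plane through $q_1, q_2, q_3$ determines $p \in \S^2$ up to sign (exchanged by the $s$-action $(p,v,t) \mapsto (-p, v, -t)$); then $t = \langle q_i, p\rangle$; the condition that $F(\sigma)^{-1}(q_1) \in \R$ selects the unit vector $v \in p^\perp$ parallel to the orthogonal projection of $q_1 - tp$ onto $p^\perp$, up to sign (exchanged by the $r$-action $v \mapsto -v$); and $x_1, z_2, z_3$ are then determined. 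The four lifts in $\Sigma \times [-1,1] \times \D^2$ thus form a single free $G$-orbit, descending to exactly one preimage in $\widetilde L$. Transversality follows from non-singularity of the leading-order linear system (equivalently, from the non-degeneracy of disks in $\Sigma_+$ in Proposition~\ref{prop:closepole}). Hence the mod~$2$ degree equals~$1$.

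\noindent\textbf{Main obstacle.} The localization step is the delicate point; it rests on the $C^\infty$ convergence up to the boundary (from the curvature estimates) together with the non-tangency to $\partial M$ built into the definition of $\boM$. Without either input, one cannot rule out a ``large'' minimal disk in $\Sigma \setminus (\Sigma_+ \cup \Sigma_-)$ threading through the three close points near $\bar p$, which would disrupt the preimage count.
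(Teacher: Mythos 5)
Your overall strategy -- localize the preimage in the foliation region near the poles, show it forms a single free $G$-orbit, and conclude the mod~$2$ degree is $1$ -- matches the paper's, and your properness and counting steps are essentially right. However there are two substantive differences, one of which is a genuine gap.

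\textbf{Localization.} You use a genuinely different argument here. The paper chooses the \emph{boundary} target $(tp+\sqrt{1-t^2}v,\,tp+\sqrt{1-t^2}p\wedge v,\,tp-\sqrt{1-t^2}v)\in(\S^2)^3$ and argues by continuity and the explicit structure of $F$ near the poles, then applies local inversion at the boundary to read off the degree at nearby interior points. You instead pick interior points $q_i=(1-\eta^2)\bar p+\eta a_i$ and argue by contradiction with compactness, boundary curvature estimates, and the fact that a limit disk in $\boM$ is never tangent to $\partial\B$. Your variant is coherent and arguably more robust, since it does not appeal to a boundary target of the degree map; it just costs you the extra work of the blow-up argument.

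\textbf{Transversality -- this is the gap.} You dispatch regularity of the preimage with the one-line claim that it ``follows from non-singularity of the leading-order linear system (equivalently, from the non-degeneracy of disks in $\Sigma_+$).'' Neither clause holds up as stated. The differential of $\widetilde H$ at a preimage in $\Sigma_+$ involves derivatives with respect to $(p,v,t)$ (which move the point at rate $O(1)$) alongside derivatives with respect to $(x_1,z_2,z_3)$ (which move the point at rate $O(\sqrt{1-t})$), so the naive ``leading-order'' system is a degenerate-looking matrix whose invertibility is not at all automatic. The paper addresses exactly this by writing $t=\cos\theta$, computing the full $9\times9$ Jacobian of $H$ explicitly, and verifying that it factors as $\sin\theta$ times a matrix that is invertible for small $\theta$ (the putative inverse has $\cot\theta$ entries, so this has to be checked, not asserted). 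Moreover, non-degeneracy of the disks in $\Sigma_+$ -- i.e.\ the absence of Jacobi fields vanishing on the boundary, which is what makes $\delta$ a local diffeomorphism near $D_{p,v,t}$ -- is a statement about the second variation of area, and is not ``equivalent'' to the invertibility of $D\widetilde H$, which also involves how the parametrization $F(\sigma)(z)$ moves in $z$. To close this gap you would need either to redo the paper's explicit Jacobian computation, or to give a genuine argument for why your leading-order linear system is nonsingular, paying attention to the different scales of the rows.

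A minor remark: in your properness paragraph you only claim properness over triples with pairwise distinct coordinates. In fact $\widetilde H:\widetilde L\to(\inter\B)^3$ is proper outright, since escaping to $\partial\widetilde K$ either sends some $|z_i|\to1$ (so $F(\sigma)(z_i)\to\partial\B$) or sends $t\to\pm1$ (so all three images converge to $\pm p\in\partial\B$); the extra restriction is unnecessary.
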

\begin{proof}
Actually we are going to prove that $\widetilde H$ has $\Z/2\Z$ degree one. It
is clear that $\widetilde H: \widetilde
L\to \inter \B\times \inter \B\times \inter \B$ is proper. So it has a well
defined mod 2 degree.

Let us compute this degree. Let $(p,v)\in U\S^2$ and $t$ close to $1$. We are
interested by $H^{-1}(tp+\sqrt{1-t^2}v,tp+\sqrt{1-t^2}p\wedge
v,tp-\sqrt{1-t^2}v)$. Clearly it is made of four points in $K$:
\begin{multline*}
(F(p,v,t),1,i,-1),(F(p,-v,t),-1,-i,1),(F(-p,v,-t),1,-i,-1),\\
(F(-p,-v,-t),-1,-i,1)
\end{multline*}
We recall that $x_1$ is real here. As a consequence, $\widetilde
H^{-1}(tp+\sqrt{1-t^2}v,tp+\sqrt{1-t^2}p\wedge v,tp-\sqrt{1-t^2}v)$ has only
one element. Let us see that it is also the case of $\widetilde
H^{-1}(q_1,q_2,q_3)$ for $(q_1,q_2,q_3)\in \inter \B \times \inter \B \times
\inter \B$ close to $(tp+\sqrt{1-t^2}v,tp+\sqrt{1-t^2}p\wedge
v,tp-\sqrt{1-t^2}v)$. First we remark that, if
\begin{multline*}
(F(p_n,v_n,t_n)(x_{1,n}),F(p_n,v_n,t_n)(z_{2,n}),F(p_n,v_n,t_n)(z_{3,n}))
\longrightarrow\\
(tp+\sqrt{1-t^2}v,tp+\sqrt{1-t^2}p\wedge v,tp-\sqrt{1-t^2}v),
\end{multline*}
then
$(p_n,v_n,t_n,x_{1,n},z_{2,n},z_{3,n})\to (p,v,t,1,i,-1)$. Indeed, we can assume
that $(p_n,v_n,t_n,x_{1,n},z_{2,n},z_{3,n})\to (\bar p, \bar v, \bar
t,x_1,z_2,z_3)$ and we notice that $\bar t\neq \pm 1$ since otherwise
$(F(p_n,v_n,t_n)(x_{1,n}),F(p_n,v_n,t_n)(z_{2,n}),F(p_n,v_n,t_n)(z_{3,n}))\to
\pm (\bar p,\bar p,\bar p)$. Once $t_n\to \bar t\in (-1,1)$, the convergence to
$(p,v,t,1,i,-1)$ is clear. So we can focus on a neighborhood of
$(p,v,t,1,i,-1)$.

Let us compute the differential of $H(p,v,t,x_1,x_2,y_2,x_3,y_3)$ at
$(p,v,t,1,0,1,-1,0)$. Actually we consider the differential of
$$
h(p,v,\theta,x_1,x_2,y_2,x_3,y_3)=H(p,v,\cos\theta,x_1,x_2,y_2,x_3,y_3).
$$
We
notice that the tangent space to $(p,v)$ in $U\S^2$ is $\{(q,w)\in T_p\S^2\times
T_v\S^2|(q,v)+(p,w)=0\}$. So
$$
D_{(p,v)}h(q,w)=(\cos\theta q+\sin\theta w,\cos\theta q+\sin\theta q\wedge
v+\sin\theta p\wedge w,\cos\theta q-\sin\theta w)
$$
We write $(q,w)=(av+bp\wedge v,-ap+cp\wedge v)$ so $q\wedge v=-b
p$ and $p\wedge w=-cv$ and this derivative becomes
\begin{align*}
D_{(p,v)}h(q,w)=(&(-\sin\theta p+\cos\theta v)a +\cos\theta bp\wedge v+\sin\theta cp\wedge v,\\
&\cos\theta av+(\cos\theta p\wedge v-\sin\theta p)b-\sin\theta cv,\\
&(\sin\theta p+\cos\theta v)a+\cos\theta bp\wedge v-\sin\theta cp\wedge v)
\end{align*}
For the other derivatives we have
\begin{align*}
\partial_\theta h&=(-\sin\theta p+\cos\theta v,-\sin\theta v+\cos\theta p\wedge
v,-\sin\theta p-\cos\theta v)\\
\partial_{x_1} h&=(\partial_x u_{p,v,\sin\theta}(1,0)p+\sin\theta v,0,0)\\
\partial_{x_2} h&=(0,\partial_x u_{p,v,\sin\theta}(0,1)p+\sin\theta v,0)=(0,\sin\theta v,0)\\
\partial_{y_2} h&=(0,\partial_y u_{p,v,\sin\theta}(0,1) p+\sin\theta p\wedge v,0)\\
\partial_{x_3} h&=(0,0,\partial_x u_{p,v,\sin\theta}(-1,0)p+\sin\theta v)\\
\partial_{y_3} h&=(0,0,\partial_y u_{p,v,\sin\theta}(-1,0) p+\sin\theta p\wedge
v)=(0,0,\sin\theta p\wedge v)\\
\end{align*}

We notice that, by Proposition~\ref{prop:closepole}, $\nabla
u_{p,v,\cos\theta}=O(\sin^2\theta)$. So considering the
family $(D_{p,v}h(v,-p),D_{p,v}h(p\wedge
v,0),D_{p,0}h(0,p\wedge v),\partial_\theta h,\partial_{x_1} h,\partial_{x_2}
h,\partial_{y_2} h,\partial_{x_3} h,\partial_{y_3} h)$ in the basis
$(p,0,0),(p\wedge v,0,0),(0,p,0),(0,0,p),(v,0,0)(0,v,0),(0,p\wedge v,0),(0,0,v),
(0,0,p\wedge v)$ the jacobian matrix is 
$$
\left(\begin{smallmatrix}
-\sin\theta&0&0&-\sin\theta&\partial_xu_{p,v,\cos\theta}(1,0)&0&0&0&0\\
0&\cos\theta&\sin\theta&0&0&0&0&0&0\\
0&-\sin\theta&0&0&0&\partial_yu_{p,v,\cos\theta}(0,1)&0&0&0\\
\sin\theta&0&0&-\sin\theta&0&0&0&\partial_x u_{p,v,\cos\theta}(-1,0)&0\\
\cos\theta&0&0&\cos\theta&\sin\theta&0&0&0&0\\
\cos\theta&0&-\sin\theta&-\sin\theta&0&\sin\theta&0&0&0\\
0&-\cos\theta&0&\cos\theta&0&0&\sin\theta&0&0\\
\cos\theta&0&0&-\cos\theta&0&0&0&\sin\theta&0\\
0&\cos\theta&-\sin\theta&0&0&0&0&0&\sin\theta
\end{smallmatrix}\right)
$$
So this matrix has the form

$$
\sin\theta
\begin{pmatrix}
-1&0&0&-1&0&0&0&0&0\\
0&\cot\theta&1&0&0&0&0&0&0\\
0&-1&0&0&0&0&0&0&0\\
1&0&0&-1&0&0&0&0&0\\
\cot\theta&0&0&\cot\theta&1&0&0&0&0\\
\cot\theta&0&-1&-1&0&1&0&0&0\\
0&-\cot\theta&0&\cot\theta&0&0&1&0&0\\
\cot\theta&0&0&-\cot\theta&0&0&0&1&0\\
0&\cot\theta&-1&0&0&0&0&0&1
\end{pmatrix}+O(\sin^2\theta)
$$
So if $\theta$ is sufficiently close to $0$ its inverse is
$$
\frac1{\sin\theta}\bigg(
\begin{pmatrix}
-\frac12&0&0&-\frac12&0&0&0&0&0\\
0&0&-1&0&0&0&0&0&0\\
0&1&c&0&0&0&0&0&0\\
-\frac12&0&0&\frac12&0&0&0&0&0\\
c&0&0&0&1&0&0&0&0\\
\frac{c-1}2&1&c&\frac{1-c}2&0&1&0&0&0\\
\frac c2&0&-c&-\frac c2&0&0&1&0&0\\
0&0&0&c&0&0&0&1&0\\
0&1&2c&0&0&0&0&0&1\\
\end{pmatrix}+o(1)\bigg)
$$
where $c=\cot\theta$.

So the local inversion at the boundary applies and the map $\widetilde H$ sends
diffeomorphically a neighborhood of the point $(D_{p,v,t},1,0,1,-1,0)$ in
$\widetilde\Sigma\times[-1,1]\times \D^2$ into a neighborhood of $(tp+\sqrt{1-t^2}v,tp+\sqrt{1-t^2}p\wedge
v,tp-\sqrt{1-t^2}v)$ in
$\B^3$ when $t$ is close to $1$. This implies that when $(q_1,q_2,q_3)\in
\inter\B^3$ is close to $(tp+\sqrt{1-t^2}v,tp+\sqrt{1-t^2}p\wedge
v,tp-\sqrt{1-t^2}v)$ ($t$ close to $1$) there is a unique $(\sigma,x_1,x_2,y_2,x_3,y_3)\in
\widetilde\Sigma\times(-1,1)\times \inter\D^2$ such that $\widetilde
H(\sigma,x_1,x_2,y_2,x_3,y_3)=(q_1,q_2,q_3)$ and the differential of $\widetilde
H$ at this point is invertible. So $\widetilde H$ has mod 2 degree equal to $1$. 
\end{proof}

Since $\widetilde L$ has odd degree, it is surjective. This finishes the proof
of Theorem~\ref{th:3point}.


\section{Minimal planes in asymptotically flat $3$-manifolds}

\label{sec:extent}


In this section we prove the following result which is an extension of Theorem~1 in~\cite{ChKe}.

\begin{thm}\label{th:minplane}
Let $(M,g)$ be an asymptotically flat $3$-manifold containing no closed embedded
minimal surface. Let us consider either
\begin{itemize}
\item $q\in M$ and $V\subset T_qM$ a $2$-dimensional subspace or
\item $q_1,q_2,q_3\in M$.
\end{itemize}
Then there is a complete properly embedded minimal plane $\Sigma$ in $M$ satisfying respectively
\begin{itemize}
\item $q\in \Sigma$ and $T_q\Sigma=V$ or
\item $q_i\in \sigma$ for $1\le i\le 3$.
\end{itemize}
\end{thm}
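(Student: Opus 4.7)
The plan is to exhaust $(M,g)$ by a nested sequence of compact balls $B_i \subset M$ with strictly mean convex boundary, apply Theorem~\ref{th:prescrip} (or Theorem~\ref{th:3point}) to get a minimal disk $\Sigma_i \subset B_i$ meeting the prescribed constraints, and then extract a smooth subsequential limit. Following the strategy in~\cite{ChKe}, I would first produce the exhaustion: using the asymptotically flat coordinates on the end, the large Euclidean coordinate spheres $\{|x|=R\}$ are mean convex with respect to $g$ for $R$ large, after a small conformal or normal perturbation if necessary. For each sufficiently large $i$, I set $B_i = \{|x| \le R_i\}$ (with $R_i \to \infty$) and check that the two hypotheses of the main theorem of Section~\ref{sec:prescrib} hold on $M_i := (B_i, g|_{B_i})$: strict mean convexity is built in, and the absence of closed embedded minimal surfaces is inherited from $M$. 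The prescribed data $(q,V)$ or $(q_1, q_2, q_3)$ eventually lie in $\textrm{Int}(M_i)$, so Theorem~\ref{th:prescrip} and Theorem~\ref{th:3point} yield the desired embedded minimal disks $\Sigma_i \subset M_i$ realizing the constraint.

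The next step is to extract uniform local area and curvature bounds on $\{\Sigma_i\}$. On compact sets $K \subset M$, the area bound is the quadratic area estimate of Chodosh--Ketover for minimal disks in mean convex balls of an asymptotically flat manifold (which uses comparison with Euclidean catenoids at infinity to bound area within $B_r$ by $Cr^2$), and the curvature estimate on the interior then follows from Schoen's estimate for stable minimal surfaces combined with the embeddedness and area bounds, or equivalently from White's interior curvature estimate used in Theorem~\ref{th:properness}. With uniform $C^{2,\alpha}_{\mathrm{loc}}$ control I can pass to a subsequential limit $\Sigma_\infty$ which is a smooth, properly embedded minimal surface in $M$ that contains $q$ (and is tangent to $V$ there, since tangent planes pass to the limit), or contains $q_1, q_2, q_3$.

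It remains to identify the topology of $\Sigma_\infty$. Each $\Sigma_i$ is a disk, so $\Sigma_\infty$ has genus zero; moreover, the quadratic area growth rules out a limit with infinitely many ends. Together with the absence of closed minimal surfaces in $M$, standard topological arguments (as in~\cite{ChKe}) show that in fact $\Sigma_\infty$ has exactly one end and is therefore a properly embedded plane. The prescribed incidence and tangency conditions persist in the limit because the $q, V$ data (respectively the three points $q_i$) are fixed throughout the exhaustion and the $C^{2,\alpha}_{\mathrm{loc}}$ convergence preserves base point and tangent plane information.

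The hard part is to rule out degeneration in the limit: one must exclude the possibilities that $\Sigma_i$ develops multiple sheets through $q$, escapes to infinity losing the basepoint, or converges to a lower-dimensional lamination. The basepoint is guaranteed to survive because $q \in \Sigma_i$ for all large $i$ gives a uniform bound from below on the area of $\Sigma_i \cap B_r(q)$ via the monotonicity formula, preventing vanishing. The tangent plane constraint at $q$ prevents sheeting if one uses an Allard-type regularity argument. In the three-point version, the analogous obstacle is excluding that the limit plane is a double cover of a surface passing through only one or two of the $q_j$; again monotonicity plus embeddedness forces one component through each $q_j$, and the genus-zero constraint glues them into a single plane. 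Together these observations produce the desired properly embedded minimal plane and complete the proof of Theorem~\ref{th:minplane}.
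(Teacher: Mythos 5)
Your outline—exhaust by large mean-convex coordinate balls $\B(R)$, apply Theorem~\ref{th:prescrip}/\ref{th:3point} inside each, and pass to a subsequential limit—is exactly the strategy the paper uses. But there are two substantive gaps in the execution.

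First, and most importantly, you don't confront the main new difficulty the paper must handle relative to Chodosh--Ketover: the disks $\Sigma_R$ produced by Theorem~\ref{th:prescrip} (or \ref{th:3point}) have boundaries that are (nearly) \emph{parallel} circles $\Phi(p,v,t)$ for some $t\in(-1,1)$, \emph{not} a priori equatorial circles. Chodosh--Ketover's Proposition~7 requires almost-equatorial boundaries as an input. The paper's Proposition~\ref{prop:compact} is precisely the device that removes this hypothesis: one rescales $\Sigma_R\setminus\B^3(\lambda_R)$ by $1/R$, notes that the boundary length bound $2\pi(1+o(1))R$ and mean convexity give the area bound $\|\widetilde\Sigma_R\|(\B)\le \pi(1+o(1))$, and then observes that the resulting limit varifold is stationary in the punctured ball, has mass $\le\pi$, and has the origin in its support because $q\in\Sigma_R$. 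The monotonicity formula then forces the limit to be a flat unit disk through the origin with multiplicity one, and \emph{this} is what forces $\partial\Sigma_R$ to converge to an equator. Your sketch asserts ``quadratic area growth'' and ``one end'' without deriving them; but unless you show the parallel boundaries are asymptotically equatorial, you have not established the hypotheses under which the Chodosh--Ketover limiting argument applies. The lower area bound coming from monotonicity at $q$ that you mention is a necessary ingredient, but you haven't closed the loop by combining it with the upper area bound and the varifold classification to pin down the boundary behavior.

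Second, the curvature estimate you invoke is the wrong one. You cite Schoen's estimate for \emph{stable} minimal surfaces, but the disks coming from Theorem~\ref{th:prescrip}/\ref{th:3point} carry no stability information—they are produced by a degree argument in $\boM$, not by minimization. The relevant estimate is White's interior curvature estimate for embedded minimal surfaces with area bounds (Theorem~0 of \cite{Whi3}), which is precisely what underlies Theorem~\ref{th:properness} and what the paper (and \cite{ChKe}) uses. Relatedly, your appeal to an ``Allard-type regularity argument'' to prevent sheeting is not what the paper does and is somewhat imprecise; the sheeting/multiplicity issue is instead resolved by the same monotonicity-plus-mass-bound classification of the limit varifold that also yields the equatorial boundary convergence.
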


First let us recall what is the asymptotic flatness hypothesis we consider. $M$
is diffeomorphic to $\R^3$ and, in these coordinates, the metric can be written
$g=g_0+h$ where $g_0$ is the Euclidean metric and $h$ satisfies
$|h|+r|Dh|+r^2|D^2h|\to 0$ as $r\to\infty$ ($r$ the Euclidean distance to the
origin of $\R^3$).

Actually, once Theorems~\ref{th:prescrip} and \ref{th:3point} are known, the proof of the above
theorem follows the ideas of Chodosh and Ketover in~\cite{ChKe}. More precisely
we use a variant of \cite[Proposition~7]{ChKe}. Before let us fix a
notation. Let $\{\Gamma_R\}$ be a family of closed curves in $\S^2(R)=\partial
\B(R)$ where $\B(R)$ is the Euclidean ball of radius $R$. We say that they are
$C^{2,\alpha}$ almost parallel curves if, after
rescaling to unit size, $\frac1R\Gamma_R\subset  \S^2$ converges in the $C^{2,\alpha}$ sense to a
parallel curve in $\S^2$ or to a constant map.

\begin{prop}\label{prop:compact}
Let $(M,g)$ be an asymptotically flat manifold diffeomorphic to $\R^3$ that
contains no closed embedded minimal surfaces. 

Let $\{\Sigma_R\}$ be a
family of embedded minimal disks in
$\B(R)$ containing $q\in M$ and whose boundaries $\partial\Sigma_R\subset
\partial \B(R)$ are $C^{2,\alpha}$ almost parallel curves. Then a subsequence of
$\{\Sigma_R\}$ converges smoothly on compact subsets of $M$ to a complete
properly embedded minimal plane $\Sigma_\infty$.
\end{prop}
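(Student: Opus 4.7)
The argument follows the scheme of Proposition~7 in~\cite{ChKe} closely; the hypotheses on $M$ are the same, and only the boundary data are relaxed (from round curves to $C^{2,\alpha}$ almost parallel curves), which changes nothing essential. I sketch the main steps.

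The first step is to establish uniform area and curvature bounds on compact subsets of $M$. Since $\partial \Sigma_R$ is almost parallel in $\S^2(R)$ and $g$ is asymptotically Euclidean, the $g$-length of $\partial\Sigma_R$ is $O(R)$; combined with the no-closed-minimal-surface hypothesis, the area estimate of Theorem~2.1 in~\cite{Whi4} yields $\mathrm{area}_g(\Sigma_R \cap \B(\rho)) \leq C \rho^2$ uniformly in $R$ for every fixed $\rho$. The curvature estimate of Theorem~0 in~\cite{Whi3} then provides a uniform bound $|A_{\Sigma_R}| \leq C_K$ on each compact $K \subset M$. These are precisely the tools used in Theorem~\ref{th:properness} above.

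With these bounds, a standard diagonal extraction produces a subsequence $\Sigma_{R_j}$ converging smoothly on compact subsets of $M$ to a complete embedded minimal surface $\Sigma_\infty$ passing through $q$. The convergence has multiplicity one: otherwise the limit would be two-sided and stable, and a one-sided minimization in a component of $M \setminus \Sigma_\infty$ would produce a closed embedded minimal surface, contradicting the hypothesis. Moreover, since each $\Sigma_{R_j}$ is simply connected with connected boundary, any essential loop in $\Sigma_\infty \cap K$ would bound a disk in $\Sigma_{R_j}$ for $j$ large and hence be null-homotopic in $\Sigma_\infty$; so $\Sigma_\infty$ is simply connected.

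It remains to show $\Sigma_\infty$ is a properly embedded plane. In the asymptotically flat region, the curvature decay combined with the almost parallel behaviour of $\partial \Sigma_{R_j}$ forces $\Sigma_\infty$ to have a single end that is graphical over an asymptotic affine plane, which yields properness in $M$ and confirms the topological type $\R^2$. The main obstacle is the uniform curvature estimate on compact subsets: it is here that both the absence of closed minimal surfaces and the almost parallel boundary hypothesis are genuinely used, and it is essentially the only place where the Chodosh--Ketover argument needs to be rechecked under our slightly weaker boundary hypothesis.
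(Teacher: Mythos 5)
Your proof misses the central point of this proposition and, as written, has a genuine gap exactly where the paper's own argument does the real work. You assert at the outset that relaxing from ``round curves to $C^{2,\alpha}$ almost parallel curves \ldots changes nothing essential,'' but the paper states the opposite in the sentence immediately preceding the proof: the whole difference with Chodosh--Ketover is that one does \emph{not} assume a priori that $\partial\Sigma_R$ is almost equatorial. ``Almost parallel'' includes curves converging to a non-equatorial parallel or even shrinking to a constant map, and for such boundary curves the Chodosh--Ketover machinery cannot be applied directly. The content of the proposition is precisely that the constraint $q\in\Sigma_R$ forces the boundary to become equatorial.

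The paper's proof establishes this via a rescaling/varifold argument you do not reproduce. One first gets the sharp global bound $\text{area}(\Sigma_R\setminus\B(\lambda_R))\le \pi(1+o(1))R^2$ (not merely $\le CR^2$; the constant $\pi$ is essential). Rescaling by $1/R$, the surfaces $\widetilde\Sigma_R=\frac1R(\Sigma_R\setminus\B(\lambda_R))$ are stationary integral varifolds in $(\B,\tilde g_R)$ with $\tilde g_R\to g_{\text{eucl}}$, with mass $\le\pi(1+o(1))$ and with $0$ in the support of the limit (because $q\in\Sigma_R$ rescales to near the origin). After passing to a subsequential varifold limit $V$ and using the removable-singularity / extension step, $\|V\|(\B)\le\pi$, while the monotonicity formula for an integral varifold through the origin forces $\|V\|(\B)\ge\pi$. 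Equality in monotonicity identifies $V$ as a flat unit disk with multiplicity one, and in particular $\partial\Sigma_R/R$ must converge to an equator. Only \emph{after} this rigidity step does the argument reduce to Proposition~7.2 of Chodosh--Ketover. Your proof never establishes this, and consequently the final paragraph (``the curvature decay combined with the almost parallel behaviour \ldots forces $\Sigma_\infty$ to have a single end that is graphical over an asymptotic affine plane'') is unsupported: if the boundary curves shrank to a point or converged to a high parallel, no such conclusion would be available by the soft compactness you invoke.

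A secondary inaccuracy: you cite White's area estimate (Theorem~2.1 in \cite{Whi4}) as giving ``$\mathrm{area}_g(\Sigma_R\cap\B(\rho))\le C\rho^2$ uniformly in $R$ for every fixed $\rho$.'' That theorem gives a \emph{global} area bound in terms of the area of the region cut out on $\partial M$ by the boundary curve; it is not a localized monotonicity-type estimate, and as noted above the relevant bound must come with the sharp constant $\pi$ for the rigidity step to work. The localized curvature estimates you then want are of Schoen--Simon / Colding--Minicozzi type for embedded minimal disks, and they are applied \emph{after} the equatorial reduction, as in \cite{ChKe}, not before. Your topological argument for simple connectivity of the limit (loops bounding disks in $\Sigma_{R_j}$) is also not airtight as stated, since those filling disks need not converge; but this is a secondary issue next to the missing equatorial step.
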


The difference with \cite{ChKe} is that we do not assume \textit{a priori} that
$\partial\Sigma_R$ is almost equatorial.

\begin{proof}
First we notice that, for large $R$, the
length of $\partial \Sigma_R$ is less than $2\pi(1+o(1)) R$.

Let $\lambda_R\simeq \sqrt R$ be such that $\Sigma_R\cap \S^2(\lambda_R)$ is
transverse. As in \cite{ChKe}, $\text{area}(\Sigma_R\setminus \B
^3(\lambda_R))\le \pi(1+o(1))R^2$. So $\widetilde\Sigma_R=\frac1R
(\Sigma_R\setminus \B^3(\lambda_R))$ is a stationary integral varifold in
$(\B,\tilde g_R)$ ($\tilde g_R$ is the homothetic of $g$ by $\frac1R$ and
converges to the Euclidean metric as $R\to \infty$).

Since $\|\widetilde\Sigma_R\|(\B)\le \pi(1+o(1))$, we can assume that
$\widetilde\Sigma_R$ converge as varifolds to $V$ which is stationary in
$\B^3\setminus \{0\}$ endowed with the flat metric and satisfies
$\|V\|(\B\setminus\{0\})\le \pi$. Actually,
$V$ extends to a stationary varifold of $\B$ with $\|V\|(\B)\le \pi$. The
origin is in the support of $V$ since the origin is contained in $\Sigma_R$.

By the monotonicity formula $V$ is the varifold associated to a flat unit disk
through the origin with multiplicity one. This implies that $\partial \Sigma_R$
is converging to an equator. Now the rest of the proof is similar to the one of
\cite[Proposition~7.2]{ChKe}.
\end{proof}

\begin{proof}[Proof of Theorem \ref{th:minplane}]
Let $q$ and $V$  or $q_1,q_2,q_3$ be as in the statement. Since the metric is asymptotically
Euclidean, we consider a chart $M\simeq(\R ^3,g)$ with the prescribed asymptotics
for $g$. This implies that for large $R$, the ball $\B(R)$ is mean convex. So we
can apply Theorem~\ref{th:prescrip} or \ref{th:3point} and obtain a minimal disk $\Sigma_R$ whose
boundary is an almost parallel curve such that $(q,V)=T_q\Sigma_R$ or $q_i\in \Sigma_R$.

By Proposition~\ref{prop:compact}, a subsequence of $\Sigma_R$ converges to
$\Sigma_\infty$ a properly embedded minimal plane in $M$. Since the convergence
is smooth $(q,V)=T_q\Sigma_\infty$ or $q_i\in \Sigma_\infty$.
\end{proof}

\begin{remarq}
As in \cite{ChKe}, Theorem~\ref{th:minplane} extends to the case where $M$ is
asymptoticaly conical: $M$ is $\R^3$ endowed outside a compact subset with a
metric $g=g_\alpha+h$ where $g_\alpha$ is the conical metric
$g_\alpha=dr^2+r^2\alpha^2g_{\S^2}$ and $h$ satisfies
$|h|+r|Dh|_{g_\alpha}+r^2|D^2h|_{g_\alpha}\to 0$ as $r\to\infty$
\end{remarq}


\section{A counterexample to uniqueness}


The arguments used in the preceding section prove that the minimal plane $\Sigma$ 
given by Theorem~\ref{th:minplane} for some $(q,V)$ (or $(q_1,q_2,q_3$) satisfies 
$|\Sigma\cap \B^3(R)|\le \pi(1+o(1))R^2$. One could ask the question of the uniqueness 
of such a minimal plane with quadratic area growth. In this section we give a 
counterexample to such a statement.

More precisely, we are going to construct on $\R^3$, an asymptotically Euclidean metric 
with no closed minimal surface
and $2$ (actually $3$) distinct minimal planes tangent at the origin 
that have quadratic area growth.

In order to construct this metric, let us consider a $C^\infty$ radial function $\phi : 
\R^3\to \R$  whose support is the unit ball and such that $0\le\phi\le1$ and $\phi=1$ on 
the ball of radius$\frac9{10}$.

Let us consider $r,\eps>0$ and define the function $\psi_{r,\eps}$ by
$$
\psi_{r,\eps}(x,y,z)=\sum\eps^2\phi\left((x,y,\frac z\eps)-(1+r)(\pm e_1+\pm
e_2)\right)
$$
where $e_1, e_2$ are the horizontal vectors of the canonical basis of $\R^3$.
We notice that the support of $\psi_{r,\eps}$ is the union of four ellipsoids
centered at $(1+r)(\pm e_1+\pm e_2)$. Moreover, $\psi_{r,\eps}$ is symmetric
with respect to the coordinate planes and axis. Finally, as $\eps\to 0$,
$\psi_{r,\eps}\to 0$ in a $C^1$ sense.

On $\R^3$, we consider the metric $g_{r,\eps}=(1+\psi_{r,\eps})^2 g_0$ which is
Euclidean outside a large ball. We are going to prove that, for a good choice
of $(r,\eps)$,  $g_{r,\eps}$ gives a counterexample to the uniqueness.

First we solve a Plateau problem. We notice that the domain
$\Ome_R$ of $\R^3$ defined by $\{(x,y,z)\in \R^3\mid  x\ge 0,y\ge 0, x^2+y^2\le
R^2,0\le z\le \eps\}$ ($R\ge (\sqrt2+1)(1+r)$) is mean convex. In $\partial\Ome_R$,
we consider the curve $\Gamma_R=\partial(\Ome_R\cap\{z=0\})$. Let us
study the Plateau problem associated to $\Gamma_R$ in $\Ome_R$. One possible
candidate solution is given by the quarter disk $\Delta_R=\partial\Ome_R\cap\{z=0\}$
which is minimal. For the metric $g_{r,\eps}$, its area can be bounded below by
$$
|\Delta_R|\ge \frac14\pi
R^2-\pi+\pi(1-(\frac9{10})^2)+\pi(1+\eps^2)^2(\frac9{10})^2=\frac14\pi
R^2+2\pi(\frac9{10})^2\eps^2+o(\eps^2)
$$

Let us construct an other competitor for the plateau problem. Let $\rho$ be the
radial Euclidean distance in $\R^2$ from the point $(1+r)(\pm e_1+\pm e_2)$ and
consider the radial function $u$ in $\R^2$ defined by 
$$
u(\rho)=\begin{cases}
\eps& \text{if }\rho\le 1\\
-\eps\frac{\ln\frac{\rho}{1+r}}{\ln(1+r)}& \text{if }1\le \rho\le 1+r\\
0& \text{if }\rho\ge 1+r
\end{cases}
$$
We notice that $0\le u\le \eps$ and if $G$ is the intersection of the graph of
$u$ with $\Ome_R$, we have $\partial G=\Gamma_R$ (we use here that $R\ge 
(\sqrt2+1)(1+r)$). Moreover $G$ is contained in
the part of $\Ome_R$ where $g_{r,\eps}=g_0$. So we can compute its area by
\begin{align*}
|G|&= \frac14\pi R^2-\pi(1+r)^2+\pi+2\pi\int_1^{1+r}\sqrt{1+u'^2(\rho)}\rho
d\rho\\
&\le \frac14\pi
R^2-\pi(1+r)^2+\pi+2\pi\int_1^{1+r}(1+\frac{\eps^2}{\ln^2(1+r)\rho^2})\rho
d\rho\\
&\le \frac14\pi R^2+2\pi \frac{\eps^2}{\ln(1+r)}
\end{align*}
So by choosing $r$ large enough in order to have
$\frac{2\pi}{\ln(1+r)}<2\pi(\frac9{10})^2$ we are sure that for small $\eps$,
$|G|\le |\Delta_R|$. Hence the solution to the Plateau problem is not
$\Delta_R$.

So let us fix $r$ as above and consider an increasing sequence $(R_n)$
converging to $+\infty$ with $R_n\ge (\sqrt 2+1)(1+r)$. Let us choose $\eps$ as above 
for $R_1$. Let $D_1$ be the solution of the Plateau problem in
$\Ome_{R_1}$. Since $D_1$ is not $\Delta_{R_1}$, $D_1$ touches $\Delta_1$ only
on its boundary. Now let $\Ome_{R_n}'$ be the part of $\Ome_{R_n}$ above $D_1$.
$\Ome_{R_n}'$ is also mean convex and we can consider $D_n$ the solution of the
Plateau problem in $\Ome_{R_n}'$ for the boundary curve $\Gamma_{R_n}$. The
sequence of minimal surfaces $(D_n)$ has uniformly bounded curvature since they are
stable. Moreover they are area minimizing so they have local uniform area
bounds. As a consequence, considering a subsequence, $D_n\to D$ where $D$ is a
minimal surface bounded by two halflines $\R_+\times\{(0,0)\}$ and
$\{0\}\times\R_+\times\{0\}$. After symmetry along its boundary we get a minimal
plane $P$ whose tangent space at the origin is horizontal. Since $P$ is above
$D_1$, $P$ is not $\{z=0\}$. Since $D_n$ is area minimizing, the area of
$P$ grows like $\pi R^2$. Finally, $P$, the plane $\{z=0\}$ and the symmetric of $P$ with 
respect to $\{z=0\}$ are three distinct minimal planes with horizontal tangent space at the 
origin.

The last point to verify is the non-existence of a compact minimal surface in
$(\R^3,g_{r,\eps})$. Since the metric $g_{r,\eps}$ converges to the Euclidean
one as $\eps\to 0$ and is Euclidean outside the ball of radius $\sqrt 2(2+r)$,
the spheres $\{x^2+y^2+z^2=R^2\}$ are mean convex for any $R>0$. So by the maximum 
principle, no compact minimal surface can exist in $(\R^3,g_{r,\eps})$. The $C^1$
convergence of the metric is sufficient to control the mean curvature of the
spheres.


\appendix

\section{About the Smale theorem}\label{appendix}


Let us explain the use of Smale's theorem in Lemma~\ref{lem:smale}. \textit{A
priori} this theorem says that the group $\diff_\infty(\D,\partial \D)$ is
contractible. But here we are not considering $C^\infty$ diffeomorphisms: they
are $C^{k,\alpha}$.

Let us prove a first result about the regularity of the diffeomorphisms we
consider. We focus on the case $k=2$ since this is enough for us.
\begin{lem}
Let $F,G\in C^{2,\alpha}(\D,\R^3)$ be two immersions and $\phi\in
\diff_1(\D,\partial \D)$ such that $F=G\circ \phi$. Then $\phi\in
\diff_{2,\alpha}(\D,\partial \D)$.
\end{lem}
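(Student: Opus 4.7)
The plan is to differentiate the identity $F = G\circ\phi$, invert the non-square Jacobian $DG$ using the immersion hypothesis to obtain an explicit formula for $D\phi$, and then bootstrap the regularity of $\phi$ from $C^1$ up to $C^{2,\alpha}$ in two short steps.

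First I will construct a left inverse of $DG$ with controlled regularity. Since $G\in C^{2,\alpha}(\D,\R^3)$ is an immersion on the compact disk $\D$, the Jacobian $DG(y)\in M_{3\times 2}(\R)$ has rank $2$ at every point, so the $2\times 2$ symmetric matrix $DG(y)^T DG(y)$ is uniformly invertible. The map
\[
L(y):=\bigl(DG(y)^T DG(y)\bigr)^{-1} DG(y)^T
\]
is then a well-defined $C^{1,\alpha}$ map from $\D$ to $M_{2\times 3}(\R)$ satisfying $L(y)\cdot DG(y)=I_2$ for every $y\in\D$.

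Next, differentiating $F=G\circ\phi$ gives $DF(x)=DG(\phi(x))\cdot D\phi(x)$, and multiplying by $L(\phi(x))$ on the left yields
\[
D\phi(x)=L(\phi(x))\cdot DF(x).
\]
Now I bootstrap. Since $\phi\in C^1\subset C^{0,1}$ and $L\in C^{1,\alpha}\subset C^{0,\alpha}$, the composition $L\circ\phi$ lies in $C^{0,\alpha}(\D)$; multiplying by $DF\in C^{1,\alpha}\subset C^{0,\alpha}$ gives $D\phi\in C^{0,\alpha}$, hence $\phi\in C^{1,\alpha}$. With this improved regularity, the chain rule shows $L\circ\phi\in C^{1,\alpha}$, and multiplying by $DF\in C^{1,\alpha}$ yields $D\phi\in C^{1,\alpha}$, i.e.\ $\phi\in C^{2,\alpha}(\D,\D)$. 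The $C^{2,\alpha}$ regularity of $\phi^{-1}$ then follows from the inverse function theorem in Hölder spaces, since $D\phi$ is everywhere invertible ($\phi$ was already a $C^1$ diffeomorphism).

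The only technical point is to ensure these steps work up to $\partial\D$, but the immersion hypothesis and the Hölder bounds on $G$ are uniform on the compact closed disk, so the left inverse $L$ and all compositions and products are globally controlled. I do not expect any genuine obstacle: this is a standard elliptic-style bootstrap that relies only on the uniform rank condition for $DG$ and on the elementary facts that Hölder regularity is preserved by composition with Lipschitz (respectively Hölder) maps and by pointwise products of bounded Hölder functions.
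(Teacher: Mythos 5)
Your proof is correct and follows essentially the same strategy as the paper's: differentiate $F = G\circ\phi$, use the immersion (rank-$2$) condition on $DG$ to invert it, and read off Hölder regularity of $D\phi$ and $D^2\phi$ from the resulting formula. The paper cites the $C^2$ regularity of $\phi$ as "well known" and then estimates the modulus of continuity of $D^2\phi$ directly, decomposing $D^2\phi_{|p}-D^2\phi_{|q}$ using the orthogonal projection $\pi_p$ onto $T_{F(p)}\sigma$; your version is slightly more self-contained in that it carries out the entire $C^1\to C^{1,\alpha}\to C^{2,\alpha}$ bootstrap explicitly, via the pseudo-inverse $L=(DG^TDG)^{-1}DG^T$ rather than the projection. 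Both pseudo-inverse and projection play the same role (handling the non-square Jacobian), and the crucial observations — that $L$ (respectively $DG^{-1}\circ\pi$) inherits $C^{1,\alpha}$ regularity from $G$, and that compositions and products preserve Hölder classes on the compact disk — are the same in both arguments.
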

\begin{proof}
Let $\sigma$ be the surface parametrized by $F$ and
$G$. It is well known that $\phi$ is actually a $C^2$
diffeomorphism. If we look at the second differential of $F=G\circ \phi$, we
have
$$
D^2F_{|p}=D^2G_{|\phi(p)}(D\phi_{|p},D\phi_{|p})+DG_{|\phi(p)}(D^2\phi_{|p})
$$
Thus 
$$
D^2\phi_{|p}=DG_{|\phi(p)}^{-1}\big(D^2F_{|p}-D^2G_{|\phi(p)}(D\phi_{|p},D\phi_{|p})\big)
$$
Let $\pi_p$ be the orthogonal projection from $\R^3$ to $T_{F(p)}\sigma$ and
$H_p:T_p\D\times T_p\D\to T_{F(p)}\sigma\subset \R^3$ defined by
$H_p=D^2F_{|p}-D^2G_{|\phi(p)}(D\phi_{|p},D\phi_{|p})$. Thus we have
\begin{align*}
D^2\phi_{|p}-D^2\phi_{|q}&=DG_{|\phi(p)}^{-1}H_p-DG_{|\phi(q)}^{-1}H_q\\
&=DG_{|\phi(p)}^{-1}H_p-DG_{|\phi(p)}^{-1}(\pi_p H_q)+DG_{|\phi(p)}^{-1}(\pi_p H_q)-DG_{|\phi(q)}^{-1}H_q\\
&=DG_{|\phi(p)}^{-1}\circ \pi_p(H_p- H_q)+(DG_{|\phi(p)}^{-1}\circ\pi_p -DG_{|\phi(q)}^{-1}\circ \pi_q)H_q
\end{align*}
Since $F$ and $G$ are $C^{2,\alpha}$, $\|H_p-H_q\|\le C|p-q|^\alpha$ and
$\|DG_{|\phi(p)}^{-1}\circ\pi_p -DG_{|\phi(q)}^{-1}\circ \pi_q\|\le C|p-q|$. As
a consequence, we have $\|D^2\phi_{|p}-D^2\phi_{|q}\|\le C|p-q| ^\alpha$. So
$\phi\in C^{2,\alpha}(\D,\D)$ and $\phi^{-1}$ also.
\end{proof}

Now let us explain what is the consequence of Smale's theorem : the contractibility of
$\diff_\infty(\D,\partial \D)$. Let $\barre \phi=(\bar a,\bar b)$ be in
$\diff_{k,\alpha}(\D,\partial \D)$. Let $a_t$ and
$b_t$  be the two solutions of the heat equations:
$$
\begin{cases}
\partial_t a_t=\Delta a& \text{on }\R_+^*\times\D\\
a_0=\bar a&\text{on }\D\\
a_t(x,y)=x&\text{on }\R_+\times \partial \D
\end{cases}
\quad \text{and}\quad 
\begin{cases}
\partial_t b_t=\Delta b& \text{on }\R_+^*\times\D\\
b_0=\bar b&\text{on }\D\\
b_t(x,y)=y&\text{on }\R_+\times \partial \D
\end{cases}
$$
These solutions exist, are unique and $a,b\in C^k(\R_+\times \D)\cap
C^\infty(\R_+^*\times \D)$. We define $\phi_t=(a_t,b_t)$.

First we remark that $\|\phi_t\|^2=a_t^2+b_t^2$ satisfies $\partial_t \|\phi_t\|^2\le
\Delta \|\phi_t\|^2$. From the maximum principle, $\phi_t(p)\in \D$ for all
$p\in \D$ and $t\ge 0$. Thus for $t$ close to $0$, $\phi_t\in
\diff_\infty(\D,\partial \D)$ since its Jacobian does not vanish. 

Let $Z$ be a
map from a compact set to $\diff_{k,\alpha}(\D,\partial \D)$ continuous in the $C^k$
topology. By solving
the heat equation as above, we construct a map $t\mapsto Z_t\in
\diff_{k,\alpha}(\D,\partial \D)$ for $t\in [0,\eps]$ with $Z_0=Z$ and $Z_t\in
\diff_\infty(\D,\partial \D)$ if $t>0$. This map is continuous in the $C^k$
topology. Since $Z_\eps$ is a continuous map with values in
$\diff_\infty(\D,\partial \D)$ which is contractible, we can deform it to the
constant map.

So $Z$ can be deformed in the $C^k$ topology to the constant map. As a consequence,
any homotopy group of $\diff_{k,\alpha}(\D,\partial D)$ is trivial. This is
sufficient for the proof of Lemma~\ref{lem:smale}.

\bibliographystyle{amsplain}
\bibliography{../reference.bib}

\end{document}